\numberwithin{equation}{section}
\theoremstyle{plain}
\newtheorem{thm}{Theorem}[section]
\newtheorem{coro}[thm]{Corollary}
\newtheorem{prop}[thm]{Proposition}
\newtheorem{lem}[thm]{Lemma}
\newtheorem{defi}[thm]{Definition}
\theoremstyle{definition}
\theoremstyle{remark}
\newcommand{\Perp}{\perp \! \! \! \perp}
\providecommand\mathbb{\bf}
\newcommand\R{{\mathbb R}}
\newcommand\E{{\mathbb E}}
\newcommand\pref[1]{(\ref{#1})}
\newcommand\indi{{\bf 1}}
\newcommand\qtext[1]{\quad\text{#1}\quad}
\let \eps\varepsilon
\let \phi\varphi
\newcommand{\PP}{\mathbb{P}}
\newcommand\CC{{\cal C}}
\newcommand\tX{\widetilde{X}}
\newcommand\tY{\widetilde{Y}}
\newcommand\tU{\widetilde{U}}
\newcommand\oU{\overline{U}}
\newcommand\oal{\overline{\alpha}}
\newcommand\obe{\overline{\beta}}
\newcommand\Law{\mathop{\mathrm{Law}}\nolimits}
\def\<#1,#2>{\left<#1,#2\right>}
\begin{document}

\title{Vector quantile regression beyond correct specification}
\author{G. Carlier\thanks{{\scriptsize CEREMADE, UMR CNRS 7534, Universit\'e
Paris IX Dauphine, Pl. de Lattre de Tassigny, 75775 Paris Cedex 16, FRANCE,
and MOKAPLAN Inria Paris, \texttt{carlier@ceremade.dauphine.fr}}}, V.
Chernozhukov \thanks{{\scriptsize Department of Economics, MIT, 50 Memorial
Drive, E52-361B, Cambridge, MA 02142, USA, \texttt{vchern@mit.edu}}}, A.
Galichon \thanks{{\scriptsize Economics Department and Courant Institute of
Mathematical Sciences, NYU, 70 Washington Square South, New York, NY 10013,
USA \texttt{ag133@nyu.edu}.}}}
\maketitle

\begin{abstract}
This paper studies vector quantile regression (VQR), which is a way to model
the dependence of a random vector of interest with respect to a vector of
explanatory variables so to capture the whole conditional distribution, and
not only the conditional mean. The problem of vector quantile regression is
formulated as an optimal transport problem subject to an additional
mean-independence condition. This paper provides a new set of results on VQR
beyond the case with correct specification which had been the focus of previous work. First,
we show that even under misspecification, the VQR\ problem still has a
solution which provides a general representation of the conditional
dependence between random vectors. Second, we provide a detailed comparison
with the classical approach of Koenker and Bassett in the case when the
dependent variable is univariate and we show that in that case, VQR is
equivalent to classical quantile regression with an additional monotonicity
constraint.
\end{abstract}

\textbf{Keywords:} vector quantile regression, optimal transport, duality.

\section{Introduction}

\label{intro}

Vector quantile regression was recently introduced in \cite{ccg} in order to
generalize the technique of quantile regression when the dependent random
variable is multivariate. Quantile regression, pioneered by Koenker and
Bassett \cite{kb}, provides a powerful way to study dependence between
random variables assuming a linear form for the quantile of the endogenous
variable $Y$ given the explanatory variables $X$. It has therefore become a
very popular tool in many areas of economics, program evaluation,
biometrics, etc. However, a well-known limitation of the approach is that $Y$
should be scalar so that its quantile map is defined. When $Y$ is
multivariate, there is no canonical notion of quantile, and the picture is
less clear than in the univariate case\footnote{%
There is actually an important literature that aims at generalizing the
notion of quantile to a multidimensional setting and various different
approaches have been proposed; see in particular \cite{belloni}, \cite%
{hallin}, \cite{PuccettiScarsini} and the references therein.}. The approach
proposed in \cite{ccg} is based on optimal transport ideas and can be
described as follows. For a random vector vector $Y$ taking values in ${%
\mathbb{R}}^{d}$, we look for a random vector $U$ uniformly distributed on
the unit cube $[0,1]^{d}$ and which is maximally correlated to $Y$, finding
such a $U$ is an optimal transport problem. A celebrated result of Brenier
\cite{brenier} implies that such an optimal $U$ is characterized by the
existence of a convex function $\varphi $ such that $Y=\nabla \varphi (U)$.
When $d=1$, of course, the optimal transport map of Brenier $\nabla \varphi
=Q$ is the quantile of $Y$ and in higher dimensions, it still has one of the
main properties of univariate quantiles, namely monotonicity. Thus Brenier's
map $\nabla \varphi $ is a natural candidate to be considered as the vector
quantile of $Y$, and one advantage of such an approach is the pointwise
relation $Y=\nabla \varphi (U)$ where $U$ is a uniformly distributed random
vector which best approximates $Y$ in $L^{2}$.

\smallskip

If, in addition, we are given another random vector $X$ capturing a set of
observable explanatory variables, we wish to have a tractable method to
estimate the conditional quantile of $Y$ given $X=x$, that is the map $u\in
\lbrack 0,1]^{d}\mapsto Q(x,u)\in {\mathbb{R}}^{d}$. In the univariate case $%
d=1$, and if the conditional quantile is affine in $x$ i.e. $Q(x,u)=\alpha
(u)+\beta (u)x$, the quantile regression method of Koenker and Bassett gives
a constructive and powerful linear programming approach to compute the
coefficients $\alpha (t)$ and $\beta (t)$ for any fixed $t\in \lbrack 0,1]$,
dual to the linear programming problem:
\begin{equation}
\sup_{(U_{t})}\{{\mathbb{E}}(U_{t}Y)\;:\;U_{t}\in \lbrack 0,1],{\mathbb{E}}%
(U_{t})=(1-t),\;{\mathbb{E}}(XU_{t})={\mathbb{E}}(X)\}.  \label{kobas}
\end{equation}%
Under correct specification, i.e. when the true conditional
quantile is affine in $x$, this variational approach estimates the true
coefficients $\alpha (t)$ and $\beta (t)$. In \cite{ccg}, we have shown that
in the multivariate case as well, when the true vector quantile is affine in
$x$, one may estimate it by a variational problem which consists in finding
the uniformly distributed random variable $U$ such that ${\mathbb{E}}(X|U)={%
\mathbb{E}}(X)$ (mean independence) and maximally correlated with $Y$.

\smallskip

The purpose of the present paper is to understand what these variational
approaches tell about the dependence between $Y$ and $X$ in the general case
i.e. without assuming any particular form for the conditional quantile. Our
main results are the following:

\begin{itemize}
\item \textbf{A general representation of dependence:} we will characterize
the solution of the optimal transport problem with a mean-independence
constraint from \cite{ccg} and relate it to a relaxed form of vector
quantile regression. To be more precise, our theorem \ref{qrasfoc} below
will provide the following general representation of the distribution of $%
\left( X,Y\right) $:
\begin{equation*}
\begin{split}
Y& \in \partial \Phi _{X}^{\ast \ast }(U)%
\mbox{ with $X\mapsto \Phi_X(U)$
affine}, \\
& \Phi _{X}(U)=\Phi _{X}^{\ast \ast }(U)\mbox{ almost surely,} \\
& \mbox{ $U$ uniformly distributed on $[0,1]^d$},\;{\mathbb{E}}(X|U)={%
\mathbb{E}}(X),
\end{split}%
\end{equation*}%
where $\Phi _{x}^{\ast \ast }$ denotes the convex envelope of $u\rightarrow
\Phi _{x}\left( u\right) $ for a fixed $x$, and $\partial $ denotes the
subdifferential. The main ingredients are convex duality and an existence
theorem for optimal dual variables. The latter is a non-trivial extension of
Kantorovich duality: indeed, the existence of a Lagrange multiplier
associated to the mean-independence constraint is not straightforward and we
shall prove it thanks to Komlos' theorem (theorem \ref{existdual}). Vector
quantile regression is \emph{under correct specification} if $\Phi _{x}\left( u\right) $ is
convex for all $x$ in the support, in which case one can write
\begin{equation*}
\begin{split}
Y& =\nabla \Phi _{X}(U)%
\mbox{ with $\Phi_X(.)$ convex, $X\mapsto \Phi_X(U)$
affine}, \\
& \mbox{ $U$ uniformly distributed on $[0,1]^d$},\;{\mathbb{E}}(X|U)={%
\mathbb{E}}(X).
\end{split}%
\end{equation*}%
While our previous paper~\cite{ccg} focused on the case with correct specification, the
results we obtain in the present paper are general.

\item \textbf{A precise link with classical quantile regression in the
univariate case:} it was shown in~\cite{ccg} that in the particular case
when $d=1$ and under correct specification, classical quantile
regression and vector quantile regression are equivalent. Going beyond correct specification here, we shall see that the optimal transport approach is
equivalent (theorem \ref{equivkb}) to a variant of (\ref{kobas}) where one
further imposes the monotonicity constraint that $t\mapsto U_{t}$ is
nonincreasing (which is consistent with the fact that the true quantile $%
Q(x,t)$ is nondecreasing with respect to $t$).
\end{itemize}

\smallskip

The paper is organized as follows. In section~\ref{mvquant}, introduces
vector quantiles through optimal transport. Section \ref{mvqr} is devoted to
a precise, duality based, analysis of the vector quantile regression beyond
correct specification. Finally, we shall revisit in section \ref{univar} the
univariate case and then carefully relate the Koenker and Bassett approach
to that of \cite{ccg}.

\section{Vector quantiles and optimal transport}

\label{mvquant}

Let $(\Omega ,{\mathcal{F}},\mathbb{P})$ be some nonatomic probability
space, and let $\left( X,Y\right) $ be a random vector, where the vector of
explanatory variables $X$ is valued in ${\mathbb{R}}^{N}$ and the vector of
dependent variables $Y$ is valued in ${\mathbb{R}}^{d}$.

\subsection{Vector quantiles by correlation maximization}

\label{mvquant1}

The notion of vector quantile was recently introduced by Ekeland, Galichon
and Henry \cite{egh}, Galichon and Henry \cite{gh} and was used in the
framework of quantile regression in our companion paper \cite{ccg}. The
starting point for this approach is the correlation maximization problem
\begin{equation}
\max \{{\mathbb{E}}(V\cdot Y),\;\mathop{\mathrm{Law}}\nolimits(V)=\mu \}
\label{otmd}
\end{equation}%
where $\mu :=\mathop{\mathrm{uniform}}\nolimits([0,1]^{d})$ is the uniform
measure on the unit $d$-dimensional cube $[0,1]^{d}$. This problem is
equivalent to the optimal transport problem which consists in minimizing ${%
\mathbb{E}}(|Y-V|^{2})$ among uniformly distributed random vectors $V$. As
shown in the seminal paper of Brenier \cite{brenier}, this problem has a
solution $U$ which is characterized by the condition
\begin{equation*}
Y=\nabla \varphi (U)
\end{equation*}%
for some (essentially uniquely defined) convex function $\varphi $ which is
again obtained by solving a dual formulation of (\ref{otmd}). Arguing that
gradients of convex functions are the natural multivariate extension of
monotone nondecreasing functions, the authors of \cite{egh} and \cite{gh}
considered the function $Q:=\nabla \varphi $ as the vector quantile of $Y$.
This function $Q=\nabla \varphi $ is by definition the Brenier's map, i.e.
the optimal transport map (for the quadratic cost) between the uniform
measure on $[0,1]^{d}$ and $\mathop{\mathrm{Law}}\nolimits(Y)$:

\begin{thm}
\textbf{(Brenier's theorem)} If $Y$ is a squared-integrable random vector
valued in $\mathbb{R}^{d}$, there is a unique map of the form $T=\nabla
\varphi $ with $\varphi $ convex on $[0,1]^{d}$ such that $\nabla \varphi
_{\#}\mu =\mathop{\mathrm{Law}}\nolimits(Y)$, this map is by definition the
vector quantile function of $Y$.
\end{thm}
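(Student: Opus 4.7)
The plan is to derive Brenier's theorem from Kantorovich duality applied to the correlation maximization problem \eqref{otmd}. First, I would pass to the Kantorovich relaxation: instead of looking for $V$ with $\mathrm{Law}(V)=\mu$ maximizing $\mathbb{E}(V\cdot Y)$, I consider couplings $\pi\in\Pi(\mu,\nu)$ with $\nu=\mathrm{Law}(Y)$ and maximize $\int u\cdot y\,d\pi(u,y)$. Existence of a maximizer $\pi^\star$ follows from weak compactness of $\Pi(\mu,\nu)$ (both marginals have finite second moments, which ensures tightness of the integrand via a standard truncation argument since $Y\in L^2$).

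Next, I would invoke Kantorovich duality for the cost $c(u,y)=-u\cdot y$ to produce a pair of conjugate potentials $(\varphi,\psi)$ with $\varphi(u)+\psi(y)\ge u\cdot y$ everywhere and equality $\pi^\star$-a.s. Because the cost is bilinear, $c$-conjugation coincides with Legendre--Fenchel conjugation, so one may take $\psi=\varphi^\ast$ and $\varphi=\varphi^{\ast\ast}$, hence $\varphi$ is convex and lower semicontinuous. This is the key structural point: the optimal dual potential is convex automatically.

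From the equality $\varphi(u)+\varphi^\ast(y)=u\cdot y$ on $\spt(\pi^\star)$ one reads off $y\in\partial\varphi(u)$ for $\pi^\star$-a.e.\ $(u,y)$. Since $\mu$ is the uniform measure on $[0,1]^d$, it is absolutely continuous, and a convex function is Lebesgue-a.e.\ differentiable; hence $\partial\varphi(u)=\{\nabla\varphi(u)\}$ for $\mu$-a.e.\ $u$, so $\pi^\star$ is concentrated on the graph of $T:=\nabla\varphi$. This shows $T_{\#}\mu=\nu$ and that the original Monge problem is solved by $T$, with $U:=T^{-1}$ (in the Kantorovich sense) realizing the maximum in \eqref{otmd}.

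For uniqueness, suppose $T_1=\nabla\varphi_1$ and $T_2=\nabla\varphi_2$ are two maps of the stated form with $(T_i)_{\#}\mu=\nu$. Then both graphs support optimal transport plans (by the cyclical monotonicity of subdifferentials of convex functions together with Rockafellar's characterization, any such map is automatically optimal for the quadratic cost), so $\nabla\varphi_1=\nabla\varphi_2$ $\mu$-a.e., which is the claim. I expect the main obstacle to be the rigorous justification of the duality-plus-subdifferential step — specifically, controlling the potentials $\varphi$ and $\varphi^\ast$ well enough (integrability against $\mu$ and $\nu$, lower semicontinuity, properness) to apply Fenchel's identity on the support of $\pi^\star$ and then invoking a.e.\ differentiability; everything else is either compactness or a direct consequence of convex duality.
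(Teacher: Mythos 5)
The paper does not prove this theorem: it is stated as background and attributed directly to Brenier \cite{brenier}, with \cite{villani}, \cite{villani2} and \cite{sanbook} cited for expository treatments, so there is no in-paper argument to compare against. What you wrote is the standard textbook proof: Kantorovich relaxation, dual attainment for the bilinear cost so that $c$-conjugation reduces to Legendre--Fenchel conjugation, Fenchel's identity on $\spt\pi^\star$ giving $y\in\partial\varphi(u)$, and then Lebesgue-a.e.\ differentiability of the convex potential against the absolutely continuous source $\mu$. That outline is correct, and the point you flag -- integrability and semicontinuity of the potentials so as to justify dual attainment and the use of Fenchel's identity when $\nu$ is not compactly supported -- is indeed where the technical work lives. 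An alternative that sidesteps dual attainment is to observe directly that $\spt\pi^\star$ is cyclically monotone (in the classical convex-analysis sense, since the cost is $u\cdot y$) and invoke Rockafellar's theorem to produce the convex $\varphi$ with $\spt\pi^\star\subset\mathrm{graph}(\partial\varphi)$; this is the route most textbooks take for exactly the reasons you anticipate.

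One step needs tightening: in the uniqueness argument you pass from \emph{both $(\mathrm{id},T_1)_{\#}\mu$ and $(\mathrm{id},T_2)_{\#}\mu$ are optimal plans} (correct, by cyclical monotonicity of $\partial\varphi_i$) to \emph{$\nabla\varphi_1=\nabla\varphi_2$ $\mu$-a.e.}\ without an intermediate step. Optimality alone does not force equality; you need uniqueness of the optimal plan, which is itself a consequence of the existence half. Concretely, if $\pi_1\neq\pi_2$ were both optimal, so would be $\tfrac12(\pi_1+\pi_2)$; but by the existence argument the latter is also concentrated on the graph of a gradient of some convex function, which is impossible if $T_1\neq T_2$ on a set of positive $\mu$-measure. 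Inserting this convex-combination argument closes the gap.
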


We refer to the textbooks \cite{villani}, \cite{villani2} and \cite{sanbook}
for a presentation of optimal transport theory, and to~\cite{otme} for a
survey of applications to economics.

\subsection{Conditional vector quantiles}

\label{mvquant2}

Take a $N$-dimensional random vector $X$ of regressors, $\nu :=%
\mathop{\mathrm{Law}}\nolimits(X,Y)$, $m:=\mathop{\mathrm{Law}}\nolimits(X)$%
, $\nu =\nu ^{x}\otimes m$ where $m$ is the law of $X$ and $\nu ^{x}$ is the
law of $Y$ given $X=x$. One can consider $Q(x,u)=\nabla \varphi (x,u)$ as
the optimal transport between $\mu $ and $\nu ^{x}$, under some regularity
assumptions on $\nu ^{x}$, one can invert $Q(x,.)$: $Q(x,.)^{-1}=\nabla
_{y}\varphi (x,.)^{\ast }$ (where the Legendre transform is taken for fixed $%
x$) and one can define $U$ through
\begin{equation*}
U=\nabla _{y}\varphi ^{\ast }(X,Y),\;Y=Q(X,U)=\nabla _{u}\varphi (X,U)
\end{equation*}%
$Q(X,.)$ is then the conditional vector quantile of $Y$ given $X$. There is,
as we will see in dimension one, a variational principle behind this definition:

\begin{itemize}
\item $U$ is uniformly distributed, independent from $X$ and solves:

\begin{equation}  \label{otmdindep}
\max \{ {\mathbb{E}}(V\cdot Y), \; \mathop{\mathrm{Law}}\nolimits(V)=\mu,
V\perp \! \! \! \perp X\}
\end{equation}

\item the conditional quantile $Q(x,.)$ and its inverse are given by $%
Q(x,u)=\nabla_u \varphi(x,u)$, $F(x,y)=\nabla_y \psi(x,y)$ (the link between
$F$ and $Q$ being $F(x, Q(x,u))=u$), the potentials $\psi$ and $\varphi$ are
convex conjugates ($x$ being fixed) and solve

\begin{equation*}
\min \int \varphi(x,u) m(dx) \mu(du) + \int \psi(x,y) \nu(dx, dy) \; : \;
\psi(x,y)+\varphi(x,u)\ge y\cdot u.
\end{equation*}
\end{itemize}

%We can make a heuristic statement or prove a theorem under regularity assumptions: Alfred and Victor, this is up to you...

Note that if the conditional quantile function is affine in $X$ and $%
Y=Q(X,U)=\alpha(U) +\beta(U) X$ where $U$ is uniform and independent from $X$%
, the function $u\mapsto \alpha(u)+\beta(u)x$ should be the gradient of some
function of $u$ which requires
\begin{equation*}
\alpha=\nabla \varphi, \; \beta=Db^T
\end{equation*}
for some potential $\varphi$ and some vector-valued function $b$ in which
case, $Q(x,.)$ is the gradient of $u\mapsto \varphi(u)+b(u)\cdot x$.
Moreover, since quantiles are gradients of convex potentials one should also
have
\begin{equation*}
u\in [0,1]^d \mapsto \varphi(u) +b(u) \cdot x \mbox{ is convex}.
\end{equation*}

\section{Vector quantile regression}

\label{mvqr}

%Given a random-vector $(X,Y)$ in $\R^N\times \R^d$, our aim is to design a variational scheme for estimating the stochastic factor regression model
%\[Y=\alpha(U) + \beta(U) X, \;  U \mbox{ uniformly distributed on $[0,1]^d$, $X$ mean-independent from $U$}\]
%and the coefficients $\alpha$ and $\beta$ map respectively $[0,1]^d$ to $\R^d$ vectors and $d\times N$-matrices.

% Denoting by $Q(x,.)$ the quantile of $Y$ given $X=x$, it follows from paragraph \ref{mvquant2} that the above affine model is consistent with an affine form of the conditional quantile  if there exists $\varphi$ and such that for $m$-a.e.$x$
%\[u\in [0,1]^d \mapsto \varphi(u) +b(u) \cdot x \mbox{ is convex  (and smooth)}\]
%and
%\[\alpha=\nabla \varphi, \; \beta =Db^T.\]
In the next paragraphs, we will impose a parametric form of the dependence
of the dependence of the vector quantile $Q(x,u)$ with respect to $x$. More
specifically, we shall assume that $Q(x,u)$ is affine in $x$. In the scalar
case ($d=1$), this problem is called quantile regression; we shall
investigate that case in section~\ref{univar} below.

\subsection{Correlation maximization}

\label{mvqr1}

Without loss of generality we normalize $X$ so that it is centered
\begin{equation*}
{\mathbb{E}}(X)=0.
\end{equation*}%
Our approach to vector quantile regression is based on a variation of the
correlation maximization problem~\ref{otmdindep}, where the independence
constraint is replaced by a mean-independence constraint, that is%
\begin{equation}
\max \{{\mathbb{E}}(V\cdot Y),\;\mathop{\mathrm{Law}}\nolimits(V)=\mu ,\;{%
\mathbb{E}}(X|V)=0\}.  \label{maxcorrmimd}
\end{equation}%
where $\mu =\mathop{\mathrm{uniform}}\nolimits([0,1]^{d})$ is the uniform
measure on the unit $d$-dimensional cube.

An obvious connection with the specification of vector quantile regression
(i.e. the validity of an affine in $x$ form for the conditional quantile) is
given by:

\begin{prop}
If $Y=\nabla \varphi(U)+ Db(U)^T X$ with

\begin{itemize}
\item $u\mapsto \varphi(u)+ b(u)\cdot x$ convex and smooth for $m$-a.e $x$,

\item $\mathop{\mathrm{Law}}\nolimits(U)=\mu$, ${\mathbb{E}}(X\vert U)=0$,
\end{itemize}

then $U$ solves (\ref{maxcorrmimd}).
\end{prop}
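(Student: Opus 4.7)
The plan is to prove optimality by a Fenchel--Young / convex-duality argument, which is the natural tool here because the hypothesis says exactly that the map $u\mapsto \Phi_x(u):=\varphi(u)+b(u)\cdot x$ is convex and that $Y$ is its gradient at $u=U$. Let me set up notation and denote by $\Phi_x^*$ the Legendre transform of $\Phi_x$ in the $u$-variable, i.e.\ $\Phi_x^*(y):=\sup_u \{y\cdot u-\varphi(u)-b(u)\cdot x\}$.

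First I would record two facts. (a) For any admissible competitor $V$ (i.e.\ $\Law(V)=\mu$ and ${\mathbb E}(X|V)=0$), the Fenchel--Young inequality gives, pointwise in $\Omega$,
\begin{equation*}
V\cdot Y \;\le\; \Phi_X(V)+\Phi_X^*(Y)\;=\;\varphi(V)+b(V)\cdot X+\Phi_X^*(Y).
\end{equation*}
(b) At $V=U$, the same inequality is an equality because $Y=\nabla\Phi_X(U)$ and $\Phi_X$ is convex and smooth at $U$ (Fenchel equality characterises subgradients of convex functions).

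Next I would take expectations in both relations and compare. The term $\E[\Phi_X^*(Y)]$ is the same in both, so it cancels. The term $\E[\varphi(V)]$ depends only on $\Law(V)=\mu$, so it equals $\int_{[0,1]^d}\varphi\,d\mu=\E[\varphi(U)]$. For the remaining term, the tower property together with the constraint ${\E}(X|V)=0$ yields
\begin{equation*}
\E[b(V)\cdot X]=\E\bigl[b(V)\cdot \E(X|V)\bigr]=0,
\end{equation*}
and symmetrically $\E[b(U)\cdot X]=0$ by the hypothesis on $U$. Combining,
\begin{equation*}
\E(V\cdot Y)\;\le\;\int \varphi\,d\mu +\E[\Phi_X^*(Y)]\;=\;\E(U\cdot Y),
\end{equation*}
which shows $U$ solves \pref{maxcorrmimd}.

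The only slightly delicate point is integrability: one needs $\E[\Phi_X^*(Y)]<\infty$ (and $\E|b(V)\cdot X|<\infty$, $\E|\varphi(V)|<\infty$) so that the expectations above make sense and the cancellations are legitimate. This is not a real obstacle under the paper's standing integrability assumptions on $(X,Y)$ and the smoothness/affine-in-$x$ structure of $\Phi$, but it is the place where the argument requires a line of justification rather than being purely formal. Otherwise the whole proof is just Fenchel--Young plus the mean-independence constraint killing the cross term $b(\cdot)\cdot X$; nothing transport-theoretic beyond that is needed.
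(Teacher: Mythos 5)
Your proof is correct and is essentially the same argument as the paper's: Fenchel--Young inequality for $\Phi_X$, equality at $V=U$ from $Y=\nabla\Phi_X(U)$, then taking expectations and using $\Law(V)=\mu$ and mean-independence to cancel the terms. You spell out the cancellation of $\E[\Phi_X(V)]$ and the integrability caveat a bit more explicitly, which the paper leaves implicit, but the route is the same.
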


\begin{proof} This result follows from \cite{ccg}, but for the sake of completeness, we give a proof:
\[Y=\nabla \Phi_X(U), \mbox{ with } \Phi_X(t)=\varphi(t)+b(X)\cdot t.\]
Let $V$ be such that $\Law(V)=\mu$,  $\E(X\vert V)=0$, then by Young's inequality
\[V\cdot Y \le  \Phi_X(V)+\Phi_X^*(Y)\]
but $Y=\nabla \Phi_X(U)$ implies that
\[U\cdot Y =  \Phi_X(U)+\Phi_X^*(Y)\]
so taking  expectations gives the desired result.

\end{proof}

\subsection{Duality}

\label{mvqr2}

From now on, we do not assume a particular form for the conditional quantile
and wish to study which information (\ref{maxcorrmimd}) can give regarding
the dependence of $X$ and $Y$. Once again, a good starting point is convex
duality. As explained in details in \cite{ccg}, the dual of (\ref%
{maxcorrmimd}) takes the form
\begin{equation}  \label{dualmimc}
\inf_{(\psi, \varphi, b)} {\mathbb{E}}(\psi(X,Y)+\varphi(U)) \; : \;
\psi(x,y)+\varphi(t)+b(t)\cdot x\ge t\cdot y.
\end{equation}
where $U$ is any uniformly distributed random vector on $[0,1]^d$ i.e. $%
\mathop{\mathrm{Law}}\nolimits(U)=\mu=\mathop{\mathrm{uniform}}%
\nolimits([0,1]^d)$ and the infimum is taken over continuous functions $%
\psi\in C(\mathop{\mathrm{spt}}\nolimits(\nu), {\mathbb{R}})$, $\varphi \in
C([0,1]^d, {\mathbb{R}})$ and $b\in C([0,1]^d, {\mathbb{R}}^N)$ satisfying
the pointwise constraint
\begin{equation}  \label{constraintdudual}
\psi(x,y)+\varphi(t)+b(t)\cdot x\ge t\cdot y, \; \; \forall (x,y,t)\in %
\mathop{\mathrm{spt}}\nolimits(\nu)\times [0,1]^d.
\end{equation}

Since for fixed $(\varphi, b)$, the largest $\psi$ which satisfies the
pointwise constraint in (\ref{dualmimc}) is given by the convex function
\begin{equation*}
\psi(x,y):=\max_{t\in [0,1]^d} \{ t\cdot y- \varphi(t) -b(t)\cdot x\}
\end{equation*}
one may equivalently rewrite (\ref{dualmimc}) as the minimimization over
continuous functions $\varphi$ and $b$ of
\begin{equation*}
\int \max_{t\in [0,1]} \{ ty- \varphi(t) -b(t)\cdot x\} \nu(dx,dy)+\int_{[0,
1]^d} \varphi(t)\mu(dt).
\end{equation*}
We claim now that the infimum over continuous functions $(\varphi, b)$
coincides with the one over smooth or simply integrable functions. Indeed,
let $b\in L^1((0,1)^d)^N$, $\varphi \in L^1((0,1)^d))$ and $\psi$ such that (%
\ref{constraintdudual}) holds. Let $\eps>0$ and, extend $\varphi$ and $b$ to
$Q_\eps:=[0,1]^d+B_{\eps}$ ($B_\eps$ being the closed Euclidean ball of
center $0$ and radius $\eps$):
\begin{equation*}
\varphi_\eps(t):=%
\begin{cases}
\varphi(t), \mbox{ if $t\in (0,1)^d$} \\
\frac{1}{\eps} \mbox{ if $t\in Q_\eps \setminus (0,1)^d$}%
\end{cases}%
, \; b_\eps(t):=%
\begin{cases}
b(t), \mbox{ if $t\in (0,1)^d$} \\
0 \mbox{ if $t\in Q_\eps \setminus (0,1)^d$}%
\end{cases}%
\end{equation*}
and for $(x,y)\in \mathop{\mathrm{spt}}\nolimits(\nu)$:
\begin{equation*}
\psi_\eps(x,y):=\max\Big(\psi(x,y), \max_{t\in Q_\eps \setminus (0,1)^d}
(t\cdot y-\frac{1}{\eps})\Big)
\end{equation*}
then by construction $(\psi_\eps, \varphi_\eps, b_\eps)$ satisfies (\ref%
{constraintdudual}) on $\mathop{\mathrm{spt}}\nolimits(\nu)\times Q_\eps$.
Let $\rho \in C_c^{\infty}({\mathbb{R}}^d)$ be a centered, smooth
probability density supported on $B_1$, and define the mollifiers $%
\rho_\delta:=\delta^{-d} \rho(\frac{.}{\delta})$, then for $\delta \in (0, %
\eps)$, defining the smooth functions $b_{\eps, \delta}:=\rho_\delta \star b_%
\eps$ and $\varphi_{\eps, \delta}:=\rho_\delta \star \varphi_\eps$, we have
that $(\psi_\eps, \varphi_{\eps, \delta}, b_{\eps, \delta})$ satisfies (\ref%
{constraintdudual}). By monotone convergence, $\int \psi_\eps \mbox{d} \nu$
converges to $\int \psi \mbox{d}\nu$, moreover
\begin{equation*}
\lim_{\delta \to 0} \int_{(0,1)^d} \varphi_{\eps, \delta} = \int_{(0,1)^d}
\varphi_\eps=\int_{(0,1)^d} \varphi,
\end{equation*}
we deduce that the value of the minimization problem (\ref{dualmimc}), can
indifferently be obtained by minimizing over continuous, smooth or $L^1$ $%
\varphi$ and $b$'s. The existence of optimal ($L^1$) functions $\psi,
\varphi $ and $b$ is not totally obvious and is proven in the appendix under
the following assumptions:

\begin{itemize}
\item the support of $\nu$, is of the form $\mathop{\mathrm{spt}}%
\nolimits(\nu):=\overline{\Omega}$ where $\Omega$ is an open bounded convex
subset of ${\mathbb{R}}^N\times {\mathbb{R}}^d$,

\item $\nu\in L^{\infty}(\Omega)$,

\item $\nu$ is bounded away from zero on compact subsets of $\Omega$ that is
for every $K$ compact, included in $\Omega$ there exists $\alpha_K>0$ such
that $\nu\ge \alpha_K$ a.e. on $K$.
\end{itemize}

\begin{thm}
\label{existdual} Under the assumptions above, the dual problem (\ref%
{dualmimc}) admits at least a solution.
\end{thm}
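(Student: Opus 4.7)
The plan is to apply the direct method of the calculus of variations, using Komlos' theorem to extract almost-everywhere converging Cesaro means from a minimizing sequence. Start with a minimizing sequence $(\psi_n,\varphi_n,b_n)$ of integrable triples satisfying the constraint (\ref{constraintdudual}); by the mollification/approximation argument already made in the text we lose nothing by working in $L^1$. Immediately replace $\psi_n$ by its canonical (partial Legendre) form
\[
\psi_n(x,y) := \sup_{t \in [0,1]^d} \bigl( t \cdot y - \varphi_n(t) - b_n(t)\cdot x \bigr),
\]
which is still feasible, cannot increase the objective, and is convex in $(x,y)$ and $\sqrt{d}$-Lipschitz in $y$ on $\bar\Omega$.

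Next, exploit the two gauge invariances of the problem: the constraint and the cost are invariant under $(\psi,\varphi)\mapsto(\psi-c,\varphi+c)$ for any constant $c\in\R$, and, using $\E(X)=0$, under $(\psi,b)\mapsto(\psi-\ell\cdot x,b+\ell)$ for any $\ell\in\R^N$. Use these to normalize $\int_{[0,1]^d}\varphi_n\,d\mu=0$ and $\int_{[0,1]^d}b_n\,d\mu=0$. The heart of the proof — and the main obstacle — is to turn these normalizations into uniform $L^1$ bounds on $\psi_n$, $\varphi_n$ and $b_n$. Convexity of $\psi_n$ on the bounded convex set $\bar\Omega$, combined with the hypothesis that $\nu$ is bounded below on compact subsets of $\Omega$, upgrades the $L^1(\nu)$ bound coming from the minimizing-sequence property into an $L^\infty_{\loc}(\Omega)$ bound. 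Integrating the constraint (\ref{constraintdudual}) against a smooth probability density compactly supported in $\Omega$ and centered at a point $(\bar x,\bar y)\in\Omega$ yields, for every $t\in[0,1]^d$, an inequality of the form $\varphi_n(t)+b_n(t)\cdot\bar x\ge t\cdot\bar y-C$; applying this at $N+1$ affinely independent centers $\bar x_0,\ldots,\bar x_N$ in the $x$-projection of $\Omega$ produces a fixed invertible linear system whose right-hand sides are uniformly bounded, and, combined with the vanishing-average normalizations, gives the desired $L^1(\mu)$ bounds on $\varphi_n$ and on every coordinate of $b_n$.

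Once these bounds are in hand, apply Komlos' theorem coordinate-wise to $(\psi_n,\varphi_n,b_n)$: along a common subsequence $(n_k)$, the Cesaro averages $\bar\psi_k$, $\bar\varphi_k$, $\bar b_k$ converge almost everywhere to limits $\psi$, $\varphi$, $b$. Since the objective is linear and the constraint convex in the triple, each $(\bar\psi_k,\bar\varphi_k,\bar b_k)$ is again admissible and minimizing, and the convexity of $\bar\psi_k$ in $(x,y)$ is preserved. Pass to the limit: the pointwise constraint holds a.e.\ and, after replacing the limit $\psi$ by the partial Legendre transform $(x,y)\mapsto\sup_t(t\cdot y-\varphi(t)-b(t)\cdot x)$ (which is lower semicontinuous and only decreases the objective), the limit triple is feasible on all of $\bar\Omega\times[0,1]^d$. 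Finally, shifting $\bar\psi_k$ and $\bar\varphi_k$ by a uniform affine lower bound to make them nonnegative, Fatou's lemma gives
\[
\int \psi\,d\nu+\int_{[0,1]^d}\varphi\,d\mu \;\le\; \liminf_{k\to\infty}\Bigl(\int\bar\psi_k\,d\nu+\int_{[0,1]^d}\bar\varphi_k\,d\mu\Bigr),
\]
so $(\psi,\varphi,b)$ attains the infimum in (\ref{dualmimc}).
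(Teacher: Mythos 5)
Your plan follows the same overall strategy as the paper: replace $\psi_n$ by its partial Legendre transform (so it is convex and Lipschitz in $y$), use the two gauge invariances of the problem to normalize, upgrade the $L^1(\nu)$ bound on $\psi_n$ to an $L^\infty_{\loc}$ bound via convexity and the lower bound on $\nu$, bound $\varphi_n$ and $b_n$ in $L^1(\mu)$, and conclude with Komlos and Fatou. The two substantive deviations are the choice of normalization --- you set $\int\varphi_n\,d\mu=\int b_n\,d\mu=0$ whereas the paper sets $\psi_n(0,\bar y)=0$ and subtracts a supporting affine function of $x$ so that $\psi_n(\cdot,\bar y)\ge 0$ --- and the way $b_n$ is bounded: you test the constraint at $N+1$ affinely independent centers and invert the resulting fixed linear system, while the paper maximizes the inequality $-b_n(t)\cdot x\le C+\varphi_n(t)$ over a ball around $(0,\bar y)$. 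Both routes are legitimate.

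There is, however, a real gap at the sentence ``upgrades the $L^1(\nu)$ bound coming from the minimizing-sequence property.'' The minimizing-sequence property together with $\int\varphi_n\,d\mu=0$ only gives an upper bound $\int\psi_n\,d\nu\le M$; to turn this into an $L^1(\nu)$ bound (and also to run the Jensen/convexity comparison, and to apply Fatou at the end) you need a uniform pointwise lower bound $\psi_n\ge -C$ on $\bar\Omega$, and a similar one for $\varphi_n$. With the paper's normalization these lower bounds are immediate: $\psi_n(\cdot,\bar y)\ge 0$ and $1$-Lipschitzness give $\psi_n\ge -|y-\bar y|\ge -C$, and the constraint at $(0,\bar y)$ gives $\varphi_n(t)\ge t\cdot\bar y$. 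With your averaged normalization they are not automatic and you do not establish them. They can be recovered --- integrate the constraint $\psi_n(x,y)\ge t\cdot y-\varphi_n(t)-b_n(t)\cdot x$ in $t$ against $\mu$ and use $\int\varphi_n\,d\mu=\int b_n\,d\mu=0$ to get $\psi_n(x,y)\ge\frac12\sum_i y_i\ge -C$; and for $\varphi_n$, pick one of your $N+1$ centers with $\bar x_0=0$ (possible since $(0,\bar y)\in\Omega$) so that $\varphi_n(t)\ge t\cdot\bar y_0-C_0$. But as written, the chain of bounds starts from an $L^1(\nu)$ control that has not been proved, and the Fatou step at the end also silently uses the missing uniform lower bounds, so you should insert these two lines explicitly.

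Two small remarks: the Lipschitz constant of $\psi_n$ in $y$ is indeed $\sqrt d$ as you say (the paper's ``$1$-Lipschitz'' is a harmless slip, or uses a different norm); and when you pass to the limit you only get the constraint for a.e.\ $(x,y,t)$, so the final replacement of $\psi$ by $\sup_t\{t\cdot y-\varphi(t)-b(t)\cdot x\}$ should be taken as an essential supremum (or one should use, as the paper does, the locally uniform convergence of $\psi_n$ from Ascoli to get the constraint at every $(x,y)$).
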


\subsection{Vector quantile regression as optimality conditions}

\label{mvqr3}

Let $U$ solve (\ref{maxcorrmimd}) and $(\psi, \varphi, b)$ solve its dual (%
\ref{dualmimc}). Recall that, without loss of generality, we can take $\psi$
convex given
\begin{equation}  \label{psiconj}
\psi(x,y)=\sup_{t\in [0,1]^d} \{ t\cdot y- \varphi(t) -b(t)\cdot x\} .
\end{equation}
The constraint of the dual is
\begin{equation}  \label{contraintedual}
\psi(x,y)+\varphi(t)+b(t)\cdot x\ge t\cdot y, \; \forall (x,y,t)\in
\Omega\times [0,1]^d,
\end{equation}
and the primal-dual relations give that, almost-surely
\begin{equation}  \label{dualrel000}
\psi(X,Y)+\varphi(U)+b(U)\cdot X= U\cdot Y.
\end{equation}
Which, since $\psi$, given by (\ref{psiconj}) , is convex, yields
\begin{equation*}
(-b(U), U)\in \partial \psi(X,Y), \mbox{ or, equivalently } (X,Y)\in
\partial \psi^*(-b(U),U).
\end{equation*}

Problems (\ref{maxcorrmimd}) and (\ref{dualmimc}) have thus enabled us to
find:

\begin{itemize}
\item $U$ uniformly distributed with $X$ mean-independent from $U$,

\item $\phi$ : $[0,1]^d \to {\mathbb{R}}$, $b$ : $[0,1]^d \to {\mathbb{R}}^N$
and $\psi$ : $\Omega\to {\mathbb{R}}$ convex,
\end{itemize}

such that $(X,Y)\in \partial \psi ^{\ast }(-b(U),U)$. Specification of
vector quantile regression rather asks whether one can write $Y=\nabla
\varphi (U)+Db(U)^{T}X:=\nabla \Phi _{X}(U)$ with $u\mapsto \Phi
_{x}(u):=\varphi (u)+b(u)x$ convex in $u$ for fixed $x$. The smoothness of $%
\varphi $ and $b$ is actually related to this specification issue. Indeed,
if $\varphi $ and $b$ were smooth then (by the envelope theorem) we would
have
\begin{equation*}
Y=\nabla \varphi (U)+Db(U)^{T}X=\nabla \Phi _{X}(U).
\end{equation*}%
But even smoothness of $\varphi $ and $b$ are not enough to guarantee that
the conditional quantile is affine in $x$, which would also require $%
u\mapsto \Phi _{x}(u)$ to be convex. Note also that if $\psi $ was smooth,
we would then have
\begin{equation*}
U=\nabla _{y}\psi (X,Y),\;-b(U)=\nabla _{x}\psi (X,Y)
\end{equation*}%
so that $b$ and $\psi $ should be related by the vectorial Hamilton-Jacobi
equation
\begin{equation}
\nabla _{x}\psi (x,y)+b(\nabla _{y}\psi (x,y))=0.  \label{hj}
\end{equation}

In general (without assuming any smoothness), define
\begin{equation*}
\psi_x(y)=\psi(x,y).
\end{equation*}
We then have, thanks to (\ref{contraintedual})-(\ref{dualrel000})
\begin{equation*}
U\in \partial \psi_X(Y) \mbox{ i.e. } Y \in \partial \psi_X^* (U).
\end{equation*}
The constraint of (\ref{dualmimc}) also gives
\begin{equation*}
\psi_x(y) +\Phi_x(t)\ge t\cdot y
\end{equation*}
since Legendre Transform is order-reversing, this implies
\begin{equation}  \label{inegdualps}
\psi_x \ge \Phi_x^*
\end{equation}
hence
\begin{equation*}
\psi_x^* \le (\Phi_x)^{**} \le \Phi_x
\end{equation*}
(where $\Phi_x^{**}$ denotes the convex envelope of $\Phi_x$). Duality
between (\ref{maxcorrmimd}) and (\ref{dualmimc}) thus gives:

\begin{thm}
\label{qrasfoc} Let $U$ solve (\ref{maxcorrmimd}), $(\psi, \varphi, b)$
solve its dual (\ref{dualmimc}) and set $\Phi_x(t):=\varphi(t)+b(t)\cdot x$
for every $(t,x)\in [0,1]^d \times \mathop{\mathrm{spt}}\nolimits(m)$ then
\begin{equation}  \label{relaxspec}
\Phi_X(U)= \Phi_X^{**}(U) \mbox{ and } U \in \partial \Phi_X^*(Y)
\mbox{
i.e. } Y \in \partial \Phi_X^{**}(U).
\end{equation}
almost surely.
\end{thm}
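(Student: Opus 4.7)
The plan is to leverage the dual constraint and the primal--dual extremality relation to obtain, for $m$-a.e.\ $x$, tight Fenchel--Young inequalities at the relevant points. The excerpt already notes that taking the supremum over $t$ in $\psi(x,y)+\Phi_x(t)\ge t\cdot y$ yields $\psi_x\ge \Phi_x^*$, and, dually (taking the sup over $y$), $\Phi_x\ge \psi_x^*$; combined with order-reversal of the Legendre transform this gives the chain $\psi_x^*\le \Phi_x^{**}\le \Phi_x$. The a.s.\ primal--dual relation \eqref{dualrel000} can be rewritten as $\psi_X(Y)+\Phi_X(U)=U\cdot Y$ almost surely. The strategy is to turn each of the global inequalities just listed into equalities at the ``occupied'' points $(X,Y,U)$.

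First, I would use the relation $\psi_X(Y)+\Phi_X(U)=U\cdot Y$ together with the Young-type bound
\[
\Phi_X^*(Y)=\sup_{t\in [0,1]^d}\bigl(t\cdot Y-\Phi_X(t)\bigr)\ge U\cdot Y-\Phi_X(U)=\psi_X(Y)
\]
to conclude $\Phi_X^*(Y)\ge \psi_X(Y)$. Combined with the reverse inequality $\psi_X\ge \Phi_X^*$ from the dual constraint, this forces $\psi_X(Y)=\Phi_X^*(Y)$ almost surely. Substituting back into the a.s.\ equality gives
\[
\Phi_X^*(Y)+\Phi_X(U)=U\cdot Y\qquad\text{a.s.}
\]

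Second, since $\Phi_X^{**}\le \Phi_X$ pointwise and Fenchel--Young gives $\Phi_X^{**}(U)+\Phi_X^*(Y)\ge U\cdot Y$, the previous identity yields
\[
U\cdot Y\;=\;\Phi_X^*(Y)+\Phi_X(U)\;\ge\;\Phi_X^*(Y)+\Phi_X^{**}(U)\;\ge\;U\cdot Y,
\]
so both inequalities are equalities almost surely. The first of these is exactly $\Phi_X(U)=\Phi_X^{**}(U)$ a.s., which is the first assertion. The saturation of the Fenchel--Young inequality $\Phi_X^{**}(U)+\Phi_X^*(Y)=U\cdot Y$ is then equivalent to the subdifferential relation $Y\in \partial \Phi_X^{**}(U)$ (equivalently $U\in\partial \Phi_X^*(Y)$), giving the second assertion.

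The only minor subtlety worth flagging is that all quantities above must be interpreted for the specific (optimal) $L^1$ dual quadruple $(\psi,\varphi,b)$ whose existence is provided by Theorem \ref{existdual}, so that the primal--dual extremality \eqref{dualrel000} holds almost surely; once this is in place the argument is purely a pointwise Fenchel--Young squeeze and presents no real obstacle. In particular no smoothness of $\Phi_x$ or of $\psi$ is needed, which is precisely why the conclusion takes the relaxed form $Y\in\partial\Phi_X^{**}(U)$ rather than $Y=\nabla\Phi_X(U)$.
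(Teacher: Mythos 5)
Your proof is correct and follows essentially the same route as the paper: both arguments combine the primal--dual equality $\psi_X(Y)+\Phi_X(U)=U\cdot Y$ with the chain $\psi_X\ge\Phi_X^*$, $\Phi_X^{**}\le\Phi_X$ and the Fenchel--Young inequalities to squeeze all inequalities into equalities at $(X,Y,U)$. The only cosmetic difference is that you first isolate $\psi_X(Y)=\Phi_X^*(Y)$ before completing the squeeze, while the paper runs the two steps in a single chain.
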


\begin{proof}
From the duality relation \pref{dualrel000} and \pref{inegdualps}, we have
\[U\cdot Y=\psi_X(Y)+\Phi_X(U)\ge \Phi_X^*(Y)+\Phi_X(U)\]
so that $U\cdot Y= \Phi_X^*(Y)+\Phi_X(U)$ and then
\[\Phi_X^{**}(U)\ge U\cdot Y-\Phi_X^*(Y)=\Phi_X(U).\]
Hence, $\Phi_X(U)=\Phi_X^{**}(U)$  and $U\cdot Y=\Phi_X^*(Y)+\Phi_X^{**}(U)$ i.e. $U\in \partial \Phi_X^*(Y)$ almost surely, and the latter is equivalent to the requirement that  $Y \in \partial \Phi_X^{**}(U)$.

\end{proof}

%\section{Conclusion}\ref{sec-concl}

The previous theorem thus gives the following interpretation of the
correlation maximization with a mean independence constraint (\ref%
{maxcorrmimd}) and its dual (\ref{dualmimc}). These two variational problems
in duality lead to the pointwise relations (\ref{relaxspec}) which can be
seen as best approximations of a specification assumption:
\begin{equation*}
Y=\nabla \Phi _{X}(U),\;(X,U)\mapsto \Phi _{X}(U)%
\mbox{ affine in $X$,
convex in $U$}
\end{equation*}%
with $U$ uniformly distributed and $\E(X\vert U)=0$. Indeed in (\ref{relaxspec}), $\Phi _{X}$ is replaced by its convex envelope,
the uniform random variable $U$ solving (\ref{maxcorrmimd}) is shown to lie
a.s. in the contact set $\Phi _{X}=\Phi _{X}^{\ast \ast }$ and the gradient of $\Phi_X$ (which may not be well-defined)  is replaced by a subgradient of $\Phi_X^{**}$.

\section{The univariate case}

\label{univar}

We now study in detail the case when the dependent variable $X$ is scalar,
i.e. $d=1$. As before, let $(\Omega ,{\mathcal{F}},\mathbb{P})$ be some
nonatomic probability space and $Y$ be some \emph{univariate} random
variable defined on this space. Denoting by $F_{Y}$ the distribution
function of $Y$:
\begin{equation*}
F_{Y}(\alpha ):=\mathbb{P}(Y\leq \alpha ),\;\forall \alpha \in {\mathbb{R}}
\end{equation*}%
the \emph{quantile} function of $Y$, $Q_{Y}=F_{Y}^{-1}$ is the generalized
inverse of $F_{Y}$ given by the formula:
\begin{equation}
Q_{Y}(t):=\inf \{\alpha \in {\mathbb{R}}\;:\;F_{Y}(\alpha )>t\}%
\mbox{ for all
}t\in (0,1).  \label{defqy}
\end{equation}%
Let us now recall two well-known facts about quantiles:

\begin{itemize}
\item $\alpha=Q_Y(t)$ is a solution of the convex minimization problem
\begin{equation}  \label{minquant}
\min_{\alpha} \{{\mathbb{E}}((Y-\alpha)_+)+ \alpha(1-t)\}
\end{equation}

\item there exists a uniformly distributed random variable $U$ such that $%
Y=Q_Y(U)$. Moreover, among uniformly distributed random variables, $U$ is
maximally correlated to $Y$ in the sense that it solves
\begin{equation}  \label{oteasy}
\max \{ {\mathbb{E}}(VY), \; \mathop{\mathrm{Law}}\nolimits(V)=\mu\}
\end{equation}
where $\mu:=\mathop{\mathrm{uniform}}\nolimits([0,1])$ is the uniform
measure on $[0,1]$.

Of course, when $\mathop{\mathrm{Law}}\nolimits(Y)$ has no atom, i.e. when $%
F_Y$ is continuous, $U$ is unique and given by $U=F_Y(Y)$. Problem (\ref%
{oteasy}) is the easiest example of optimal transport problem one can think
of. The decomposition of a random variable $Y$ as the composed of a monotone
nondecreasing function and a uniformly distributed random variable is called
a \emph{polar factorization} of $Y$, the existence of such decompositions
goes back to Ryff \cite{ryff} and the extension to the multivariate case (by
optimal transport) is due to Brenier \cite{brenier}.
\end{itemize}

We therefore see that there are basically two different approaches to study
or estimate quantiles:

\begin{itemize}
\item the \emph{local} or "$t$ by $t$" approach which consists, for a fixed
probability level $t$, in using directly formula (\ref{defqy}) or the
minimization problem (\ref{minquant}) (or some approximation of it), this
can be done very efficiently in practice but has the disadvantage of
forgetting the fundamental global property of the quantile function: it
should be monotone in $t$,

\item the global approach (or polar factorization approach), where quantiles
of $Y$ are defined as all nondecreasing functions $Q$ for which one can
write $Y=Q(U)$ with $U$ uniformly distributed; in this approach, one rather
tries to recover directly the whole monotone function $Q$ (or the uniform
variable $U$ that is maximally correlated to $Y$), in this global approach,
one should rather use the optimization problem (\ref{oteasy}).
\end{itemize}

Let us assume now that, in addition to the random variable $Y$, we are also
given a random vector $X\in {\mathbb{R}}^N$ which we may think of as being a
list of explanatory variables for $Y$. We are therefore interested in the
dependence between $Y$ and $X$ and in particular the conditional quantiles
of $Y$ given $X=x$. In the sequel we shall denote by $\nu$ the joint law of $%
(X,Y)$, $\nu:=\mathop{\mathrm{Law}}\nolimits(X,Y)$ and assume that $\nu$ is
compactly supported on ${\mathbb{R}}^{N+1}$ (i.e. $X$ and $Y$ are bounded).
We shall also denote by $m$ the first marginal of $\nu$ i.e. $m:={\Pi_{X}}%
_\# \nu=\mathop{\mathrm{Law}}\nolimits(X)$. We shall denote by $F(x,y)$ the
conditional cdf:
\begin{equation*}
F(x,y):=\mathbb{P}(Y\le y \vert X=x)
\end{equation*}
and $Q(x,t)$ the conditional quantile
\begin{equation*}
Q(x,t):=\inf\{\alpha\in {\mathbb{R}} \; : \; F(x,\alpha)>t\}.
\end{equation*}
For the sake of simplicity we shall also assume that:

\begin{itemize}
\item for $m$-a.e. $x$, $t\mapsto Q(x,t)$ is continuous and increasing (so
that for $m$-a.e. $x$, identities $Q(x, F(x,y))=y$ and $F(x, Q(x,t))=t$ hold
for every $y$ and every $t$),

\item the law of $(X,Y)$ does not charge nonvertical hyperplanes i.e. for
every $(\alpha, \beta)\in {\mathbb{R}}^{1+N}$, $\mathbb{P}%
(Y=\alpha+\beta\cdot X)=0$.
\end{itemize}

Finally we denote by $\nu^x$ the conditional probability of $Y$ given $X=x$
so that $\nu=m\otimes \nu^x$.

\subsection{A variational characterization of conditional quantiles}

\label{univar1}

Let us define the random variable $U:=F(X,Y)$, then by construction:
\begin{equation*}
\begin{split}
\mathbb{P}(U< t\vert X=x)&=\mathbb{P}(F(x,Y)<t \vert X=x)=\mathbb{P}%
(Y<Q(x,t) \vert X=x) \\
&=F(x,Q(x,t))=t.
\end{split}%
\end{equation*}
From this elementary observation we deduce that

\begin{itemize}
\item $U$ is independent from $X$ (since its conditional cdf does not depend
on $x$),

\item $U$ is uniformly distributed,

\item $Y=Q(X,U)$ where $Q(x,.)$ is increasing.
\end{itemize}

This easy remark leads to a sort of conditional polar factorization of $Y$
with an independence condition between $U$ and $X$. We would like to
emphasize now that there is a variational principle behind this conditional
decomposition. Recall that we have denoted by $\mu$ the uniform measure on $%
[0,1]$. Let us consider the variant of the optimal transport problem (\ref%
{oteasy}) where one further requires $U$ to be independent from the vector
of regressors $X$:
\begin{equation}  \label{otindep1d}
\max \{ {\mathbb{E}}(VY), \; \mathop{\mathrm{Law}}\nolimits(V)=\mu, \; V
\perp \! \! \! \perp X \}.
\end{equation}
which in terms of joint law $\theta=\mathop{\mathrm{Law}}\nolimits(X,Y, U)$
can be written as
\begin{equation}  \label{mk1}
\max_{\theta\in I (\nu, \mu)} \int u\cdot y \; \theta(dx, dy, du)
\end{equation}
where $I(\mu, \nu)$ consists of probability measures $\theta$ on ${\mathbb{R}%
}^{N+1}\times [0,1]$ such that the $(X,Y)$ marginal of $\theta$ is $\nu$ and
the $(X,U)$ marginal of $\theta$ is $m\otimes \mu$. Problem (\ref{mk1}) is a
linear programming problem and our assumptions easily imply that it
possesses solutions, moreover its dual formulation (see \cite{ccg} for
details) reads as the minimization of
\begin{equation}  \label{mk1dual}
\inf J(\varphi, \psi)= \int \varphi(x,u)m(dx) \mu(du)+\int \psi(x,y) \nu(dx,
dy)
\end{equation}
among pairs of potentials $\varphi$, $\psi$ that pointwise satisfy the
constraint
\begin{equation}  \label{constr1}
\varphi(x,u)+\psi(x,y)\ge uy.
\end{equation}
Rewriting $J(\varphi, \psi)$ as
\begin{equation*}
J(\varphi, \psi)= \int \Big( \int \varphi(x,u) \mu(du)+\int \psi(x,y)
\nu^x(dy) \Big) m(dx)
\end{equation*}
and using the fact that the right hand side of the constraint (\ref{constr1}%
) has no dependence in $x$, we observe that (\ref{mk1dual}) can actually be
solved "$x$ by $x$". More precisely, for fixed $x$ in the support of $m$, $%
\varphi(x,.)$ and $\psi(x,.)$ are obtained by solving
\begin{equation*}
\inf \int f(u) \mu(du)+ \int g(y) \nu^x(dy) \; : \; f(u)+g(y)\ge uy
\end{equation*}
which appears naturally in optimal transport and is well-known to admit a
solution which is given by a pair of convex conjugate functions (see \cite%
{villani} \cite{villani2}). In other words, the infimum in (\ref{mk1dual})
is attained by a pair $\varphi$ and $\psi$ such that for $m$-a.e. $x$, $%
\varphi(x,.)$ and $\psi(x,.)$ are conjugate convex functions:
\begin{equation*}
\varphi(x,u)=\sup_{y} \{uy-\psi(x,y)\}, \; \psi(x,y):=\sup_{u}
\{uy-\varphi(x,u)\}.
\end{equation*}
Since $\varphi(x,.)$ is convex it is differentiable a.e. and then $%
\partial_u \varphi(x,u)$ is defined for a.e. $u$, moreover $\partial_u
\varphi(x,.)_\#\mu=\nu^x$; hence $\partial_u \varphi(x,.)$ is a
nondecreasing map which pushes $\mu$ forward to $\nu^x$: it thus coincides
with the conditional quantile
\begin{equation}  \label{quantilopt}
\partial_u \varphi(x,t)=Q(x,t) \mbox{ for $m$-a.e. $x$ and every $t$}.
\end{equation}
We then have the following variational characterization of conditional
quantiles

\begin{thm}
Let $\varphi$ and $\psi$ solve (\ref{mk1dual}). Then for $m$-a.e. $x$, the
conditional quantile $Q(x,.)$ is given by:
\begin{equation*}
Q(x,.) =\partial_u \varphi(x,.)
\end{equation*}
and the conditional cdf $F(x,.)$ is given by:
\begin{equation*}
F(x,.)=\partial_y \psi(x,.).
\end{equation*}

Let now $\theta$ solve (\ref{mk1}), there is a unique $U$ such that $%
\mathop{\mathrm{Law}}\nolimits(X,Y, U)=\theta$ (so that $U$ is uniformly
distributed and independent from $X$) and it is given by $Y=\partial_u
\varphi(X,U)$ almost surely.
\end{thm}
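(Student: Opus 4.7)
The plan is to exploit the $x$-by-$x$ decomposition of the dual problem (\ref{mk1dual}) already observed in the excerpt, reducing everything to a family of scalar optimal transport problems between $\mu$ and $\nu^x$, and then to identify the resulting optimal potentials with the conditional quantile $Q(x,\cdot)$ via uniqueness of monotone rearrangement.

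First, I would fix $x$ in the support of $m$ and work with the inner minimization
\begin{equation*}
\inf \Bigl\{ \int f\, d\mu + \int g\, d\nu^x \;:\; f(u)+g(y)\ge uy \Bigr\}.
\end{equation*}
Classical one-dimensional optimal transport (Kantorovich duality for the bilinear cost $uy$) ensures that this infimum is attained by a pair of Legendre conjugates, so for $m$-a.e.\ $x$ one may take $\varphi(x,\cdot)=\psi(x,\cdot)^*$ and $\psi(x,\cdot)=\varphi(x,\cdot)^*$. In particular $\varphi(x,\cdot)$ is convex on $[0,1]$ and hence differentiable almost everywhere, and $\partial_u\varphi(x,\cdot)$ is nondecreasing.

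Next, I would check that $\partial_u\varphi(x,\cdot)_\#\mu=\nu^x$. From the primal-dual equality applied to an optimal $\theta$, the support of $\theta$ (conditionally on $x$) lies in the contact set $\{(u,y):\varphi(x,u)+\psi(x,y)=uy\}$, i.e.\ on the graph $y=\partial_u\varphi(x,u)$, so pushing $\mu$ through $\partial_u\varphi(x,\cdot)$ produces the $y$-conditional marginal, which is $\nu^x$. Now both $\partial_u\varphi(x,\cdot)$ and $Q(x,\cdot)$ are nondecreasing maps sending $\mu$ to $\nu^x$; by the assumption that $Q(x,\cdot)$ is continuous and strictly increasing, such a monotone rearrangement is unique, giving $\partial_u\varphi(x,t)=Q(x,t)$ for $m$-a.e.\ $x$ and every $t\in(0,1)$. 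Taking Legendre transforms (equivalently, inverting on the image) then yields $\partial_y\psi(x,\cdot)=F(x,\cdot)$.

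For the second statement, let $\theta$ solve (\ref{mk1}) and realize it as $\theta=\Law(X,Y,U)$ on the nonatomic space $(\Omega,\FF,\PP)$; the marginal constraints force $U$ to be uniform and independent of $X$. The primal-dual extremality relation gives
\begin{equation*}
\varphi(X,U)+\psi(X,Y)=UY\quad\text{almost surely},
\end{equation*}
and convexity of $\psi(X,\cdot)$ and $\varphi(X,\cdot)$ yields $U\in\partial_y\psi(X,Y)$ and $Y\in\partial_u\varphi(X,U)$. Combining the first inclusion with $\partial_y\psi(X,\cdot)=F(X,\cdot)$ forces $U=F(X,Y)$ a.s., which proves uniqueness of $U$; combining the second with the already established equality $\partial_u\varphi(X,\cdot)=Q(X,\cdot)$, and using that $Q(X,\cdot)$ is single-valued by our continuity hypothesis, gives $Y=\partial_u\varphi(X,U)$ a.s.

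The main obstacle I expect is the identification $\partial_u\varphi(x,\cdot)=Q(x,\cdot)$: one really needs uniqueness of the monotone rearrangement, which is why the assumption that $\nu^x$ has continuous increasing quantile is used in an essential way (if $\nu^x$ had atoms, $\partial_u\varphi(x,\cdot)$ would only be a selection from a subdifferential correspondence). The measurability in $x$ of all the identifications, and the fact that one may choose jointly measurable versions of $\varphi,\psi$ so that the $x$-by-$x$ statements lift to a global identification, is a routine disintegration/selection argument that I would handle at the end.
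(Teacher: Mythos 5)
Your proof is essentially the same as the paper's. The first part spells out more explicitly the identification $\partial_u\varphi(x,\cdot)=Q(x,\cdot)$ via uniqueness of the monotone rearrangement pushing $\mu$ to $\nu^x$; the paper states this more tersely as following from the continuity of the conditional quantile, having already noted $\partial_u\varphi(x,\cdot)_\#\mu=\nu^x$ before the theorem. One small logical wrinkle in your second part: you open by writing ``realize it as $\theta=\Law(X,Y,U)$,'' which quietly assumes the existence of $U$ on the given space with the given $X,Y$ --- precisely part of what the theorem asserts. The paper avoids this by first taking an auxiliary triple $(\tX,\tY,\tU)\sim\theta$ (which costs nothing), deducing from the primal--dual equality that $\tU=F(\tX,\tY)$ a.s., so that $\theta$ is concentrated on the graph of $(x,y)\mapsto F(x,y)$, and only then defining $U:=F(X,Y)$ on the original space and observing $\Law(X,Y,U)=\theta$. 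Your computation does establish $U=F(X,Y)$, from which existence follows in exactly this way, so the gap is cosmetic rather than substantive, but it is worth making the existence step explicit rather than asserting the realization at the outset.
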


\begin{proof}
The fact that identity \pref{quantilopt} holds for every $t$ and $m$ a.e. $x$ comes from the continuity of the conditional quantile. The second identity  comes from the continuity of the conditional cdf. Now, duality tells us that the maximum in \pref{mk1} coincides with the infimum in \pref{mk1dual}, so that if  $\theta\in I(\mu, \nu)$ is optimal for \pref{mk1dual} and $(\tX, \tY, \tU)$ has law $\theta$\footnote{the fact that there exists such a triple follows from the nonatomicity of the underlying space.},  we have
\[\E(\tU \tY)=\E(\varphi(\tX,\tU)+\psi(\tX,\tY)).\]
Hence, almost surely
\[\tU \tY=\varphi(\tX,\tU)+\psi(\tX,\tY).\]
which, since $\varphi(x,.)$ and $\psi(x,.)$ are conjugate and $\varphi(x,.)$ is differentiable, gives
\begin{equation}\label{sousdiff}
\tY= \partial_u \varphi(\tX, \tU)=Q(\tX, \tU).
\end{equation}
Since $F(x,.)$ is the inverse of the conditional quantile, we can invert the previous relation as
\begin{equation}\label{sousdiff2}
\tU= \partial_y \psi(\tX, \tY)=F(\tX, \tU).
\end{equation}
We then define
\[U:= \partial_y \psi(X, Y)=F(X,Y),\]
then obviously, by construction $\Law(X,Y, U)=\theta$ and $Y=\partial_u \varphi(X,U)=Q(X,U)$ almost surely. If $\Law(X,Y, U)=\theta$ , then as observed above, necessarily $U=F(X,Y)$ which proves the uniqueness claim.
\end{proof}

To sum up, thanks to the two problems (\ref{mk1}) and (\ref{mk1dual}), we
have been able to find a \emph{conditional polar factorization} of $Y$ as
\begin{equation}  \label{polarfact1}
Y=Q(X,U), \; \mbox{ $Q$ nondecreasing in $U$, $U$ uniform, $U \Perp X$}.
\end{equation}
One obtains $U$ thanks to the the correlation maximization with an
independence constraint problem (\ref{otindep1d}) and one obtains the
primitive of $Q(X,.)$ by the dual problem (\ref{mk1dual}).

In this decomposition, it is very demanding to ask that $U$ is independent
from the regressors $X$, in turn, the function $Q(X,.)$ is just monotone
nondecreasing. In practice, the econometrician rather looks for a specific
form of $Q$ (linear in $X$ for instance), which by duality will amount to
relaxing the independence constraint. We shall develop this idea in details
in the next paragraphs and relate it to classical quantile regression.

\subsection{Quantile regression: from specification to quasi-specification}

\label{univar2}

From now on, we normalize $X$ to be centered i.e. assume (and this is
without loss of generality) that
\begin{equation*}
{\mathbb{E}}(X)=0.
\end{equation*}
We also assume that $m:=\mathop{\mathrm{Law}}\nolimits(X)$ is nondegenerate
in the sense that its support contains some ball centered at ${\mathbb{E}}%
(X)=0$.

Since the seminal work of Koenker and Bassett \cite{kb}, it has been widely
accepted that a convenient way to estimate conditional quantiles is to
stipulate an affine form with respect to $x$ for the conditional quantile.
Since a quantile function should be monotone in its second argument, this
leads to the following definition

\begin{defi}
Quantile regression is under correct specification if there exist $(\alpha, \beta)\in C([0,1],
{\mathbb{R}})\times C([0,1], {\mathbb{R}}^N)$ such that for $m$-a.e. $x$
\begin{equation}  \label{monqr}
t\mapsto \alpha(t)+\beta(t)\cdot x \mbox{ is increasing on $[0,1]$}
\end{equation}
and
\begin{equation}  \label{linearcq}
Q(x,t)=\alpha(t)+ x\cdot \beta(t),
\end{equation}
for $m$-a.e. $x$ and every $t\in [0,1]$. If (\ref{monqr})-(\ref{linearcq})
hold, quantile regression is under correct specification with regression coefficients $%
(\alpha, \beta)$.
\end{defi}

Specification of quantile regression can be characterized by

\begin{prop}
Let $(\alpha, \beta)$ be continuous and satisfy (\ref{monqr}). Quantile
regression is under correct specification with regression coefficients $(\alpha, \beta)$ if
and only if there exists $U$ such that
\begin{equation}  \label{polarfqrind}
Y=\alpha(U)+X\cdot \beta(U) \mbox{ a.s.} , \; \mathop{\mathrm{Law}}%
\nolimits(U)=\mu, \; U \perp \! \! \! \perp X.
\end{equation}
\end{prop}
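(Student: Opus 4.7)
The proof is a straightforward two-step equivalence, with both directions resting on standard facts about univariate quantiles and the conditional polar factorization already established.

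The plan for the forward implication (correct specification implies (\ref{polarfqrind})) is to invoke the conditional polar factorization obtained in the previous subsection. That result produces, under our standing assumptions on $\nu$, a uniformly distributed random variable $U$ with $U \Perp X$ and $Y = Q(X,U)$ almost surely. Substituting the assumed affine form $Q(x,t)=\alpha(t)+x\cdot\beta(t)$ into this identity gives exactly $Y=\alpha(U)+X\cdot\beta(U)$ a.s., which is (\ref{polarfqrind}).

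For the converse, assume $U$ as in (\ref{polarfqrind}) is given. The strategy is to compute the conditional law of $Y$ given $X=x$ and read off the conditional quantile. Since $U \Perp X$ and $U \sim \mu$, conditioning on $X=x$ leaves the distribution of $U$ unchanged, so the conditional law of $Y$ given $X=x$ is the pushforward of $\mu$ by the map $\Theta_x : t \mapsto \alpha(t)+x\cdot\beta(t)$. By hypothesis (\ref{monqr}), for $m$-a.e. $x$ this map is continuous and strictly increasing on $[0,1]$; such a map is precisely the quantile function of its pushforward of $\mu$. Therefore $Q(x,\cdot)=\Theta_x(\cdot)=\alpha(\cdot)+x\cdot\beta(\cdot)$ for $m$-a.e. $x$, which is (\ref{linearcq}).

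The only step that requires care is the measure-theoretic handling of the $m$-a.e. qualifier: one must check that the $m$-null set on which the monotonicity of $\Theta_x$ may fail, and the $m$-null set on which conditional distributions might be ill-defined, can be merged into a single null set outside of which both the pushforward computation and the identification of $\Theta_x$ with $Q(x,\cdot)$ are valid. This is routine given the continuity of $(\alpha,\beta)$. No additional obstacle is anticipated; the proposition is essentially a direct reading of the conditional polar factorization.
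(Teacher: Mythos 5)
Your proof is correct and follows essentially the same route as the paper: the forward direction invokes the conditional polar factorization from the preceding subsection, and the converse uses independence of $U$ and $X$ together with the monotonicity hypothesis (\ref{monqr}) to identify $t\mapsto\alpha(t)+x\cdot\beta(t)$ as the conditional quantile. The paper phrases the converse as a direct computation of $F(x,\alpha(t)+\beta(t)\cdot x)=t$, while you phrase it as a pushforward/quantile-function identification, but these are the same argument.
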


\begin{proof}
The fact that specification of quantile regression implies decomposition \pref{polarfqrind} has already been explained in paragraph \ref{univar1}. Let us assume \pref{polarfqrind}, and compute
\[\begin{split}
F(x, \alpha(t)+\beta(t)\cdot x)&=\PP(Y\leq \alpha(t)+\beta(t) x\vert X=x)\\
&= \PP(\alpha(U)+x\cdot \beta(U) \leq \alpha(t)+\beta(t) x\vert X=x)\\
&=\PP(U\leq t \vert X=x)=\PP(U\le t)=t
\end{split}\]
so that $Q(x,t)=\alpha(t)+\beta(t)\cdot x$.
\end{proof}

Koenker and Bassett showed that, for a fixed probability level $t$, the
regression coefficients $(\alpha, \beta)$ can be estimated by quantile
regression i.e. the minimization problem
\begin{equation}  \label{kb0}
\inf_{(\alpha, \beta) \in {\mathbb{R}}^{1+N}} {\mathbb{E}}(\rho_t(Y-\alpha -
\beta\cdot X))
\end{equation}
where the penalty $\rho_t$ is given by $\rho_t(z) := tz_- +(1-t)z_+$ with $%
z_-$ and $z_+$ denoting the negative and positive parts of $z$. For further
use, note that (\ref{kb0}) can be conveniently be rewritten as
\begin{equation}  \label{kb1}
\inf_{(\alpha, \beta) \in {\mathbb{R}}^{1+N}} \{ {\mathbb{E}}%
((Y-\alpha-\beta \cdot X)_+)+(1-t) \alpha\}.
\end{equation}
As already noticed by Koenker and Bassett, this convex program admits as
dual formulation
\begin{equation}  \label{dt}
\sup \{{\mathbb{E}}(U_t Y)) \; : \; U_t \in [0,1], \; {\mathbb{E}}%
(U_t)=(1-t), \; {\mathbb{E}}(U_t X)=0 \}.
\end{equation}
An optimal $(\alpha, \beta)$ for (\ref{kb1}) and an optimal $U_t$ in (\ref%
{dt}) are related by the complementary slackness condition:
\begin{equation}
Y>\alpha +\beta \cdot X \Rightarrow U_t=1, \mbox{ and } \; Y<\alpha + \beta
\cdot X \Rightarrow U_t=0.
\end{equation}
Note that $\alpha$ appears naturally as a Lagrange multiplier associated to
the constraint ${\mathbb{E}}(U_t)=(1-t)$ and $\beta$ as a Lagrange
multiplier associated to ${\mathbb{E}}(U_t X)=0$. Since $\nu=%
\mathop{\mathrm{Law}}\nolimits(X,Y)$ gives zero mass to nonvertical
hyperplanes, we may simply write
\begin{equation}  \label{frombetatoU}
U_t=\mathbf{1}_{\{Y>\alpha +\beta \cdot X\}}
\end{equation}
and thus the constraints ${\mathbb{E}}(U_t)=(1-t)$, ${\mathbb{E}}(XU_t)=0$
read
\begin{equation}  \label{normaleq}
{\mathbb{E}}( \mathbf{1}_{\{Y> \alpha+ \beta \cdot X \}})=\mathbb{P}(Y>
\alpha+ \beta \cdot X) = (1-t),\; {\mathbb{E}}(X \mathbf{1}_{\{Y> \alpha+
\beta \cdot X \}} ) =0
\end{equation}
which simply are the first-order conditions for (\ref{kb1}).

Any pair $(\alpha, \beta)$ which solves\footnote{%
Uniqueness will be discussed later on.} the optimality conditions (\ref%
{normaleq}) for the Koenker and Bassett approach will be denoted
\begin{equation*}
\alpha=\alpha^{QR}(t), \beta=\beta^{QR}(t)
\end{equation*}
and the variable $U_t$ solving (\ref{dt}) given by (\ref{frombetatoU}) will
similarly be denoted $U_t^{QR}$
\begin{equation}  \label{utqr}
U_t^{QR}:=\mathbf{1}_{\{Y>\alpha^{QR}(t) +\beta^{QR}(t) \cdot X\}}.
\end{equation}

Note that in the previous considerations the probability level $t$ is fixed,
this is what we called the "$t$ by $t$" approach. For this approach to be
consistent with conditional quantile estimation, if we allow $t$ to vary we
should add an additional monotonicity requirement:

\begin{defi}
Quantile regression is under quasi-specification if there exists for each $t$, a
solution $(\alpha^{QR}(t), \beta^{QR}(t))$ of (\ref{normaleq}) (equivalently
the minimization problem (\ref{kb0})) such that $t\in [0,1]\mapsto
(\alpha^{QR}(t), \beta^{QR}(t))$ is continuous and, for $m$-a.e. $x$
\begin{equation}  \label{monqrqs}
t\mapsto \alpha^{QR}(t)+\beta^{QR}(t)\cdot x \mbox{ is increasing on $[0,1]$}%
.
\end{equation}
\end{defi}

A first consequence of quasi-specification is given by

\begin{prop}
\label{qrqsdec} If quantile regression is under quasi-specification and if we define $%
U^{QR}:=\int_0^1 U_t^{QR} dt$ (recall that $U_t^{QR}$ is given by (\ref{utqr}%
)) then:

\begin{itemize}
\item $U^{QR}$ is uniformly distributed,

\item $X$ is mean-independent from $U^{QR}$ i.e. ${\mathbb{E}}(X\vert
U^{QR})={\mathbb{E}}(X)=0$,

\item $Y=\alpha^{QR}(U^{QR})+ \beta^{QR}(U^{QR})\cdot X$ a.s.
\end{itemize}

Moreover $U^{QR}$ solves the correlation maximization problem with a
mean-independence constraint:

\begin{equation}  \label{maxcorrmi}
\max \{ {\mathbb{E}}(VY), \; \mathop{\mathrm{Law}}\nolimits(V)=\mu, \; {%
\mathbb{E}}(X\vert V)=0\}.
\end{equation}
\end{prop}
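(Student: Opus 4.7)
I plan to proceed by first identifying the random variable $U^{QR}$ explicitly and then deriving each item from the optimality conditions \pref{normaleq} at varying $t$. Set $A(x,t) := \alpha^{QR}(t) + \beta^{QR}(t) \cdot x$, which by quasi-specification is continuous and strictly increasing in $t$ for $m$-a.e. $x$. Reading the constraint $\E(U_t^{QR}) = 1-t$ at $t=0$ and $t=1$, together with the fact that $U_t^{QR} \in \{0,1\}$ by \pref{utqr}, forces $U_0^{QR} = 1$ and $U_1^{QR} = 0$ almost surely; combined with the hypothesis that $\nu$ charges no non-vertical hyperplane (so that $\PP(Y = A(X,t)) = 0$ for each fixed $t$), this yields $A(X,0) < Y < A(X,1)$ almost surely. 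Continuity and strict monotonicity of $A(X,\cdot)$ then produce a unique $T(X,Y) \in (0,1)$ with $A(X, T(X,Y)) = Y$, and the indicator $U_t^{QR} = \mathbf{1}_{\{Y > A(X,t)\}}$ rewrites as $\mathbf{1}_{\{T(X,Y) > t\}}$. Integrating over $t \in [0,1]$ gives $U^{QR} = T(X,Y)$ almost surely, which immediately yields $Y = \alpha^{QR}(U^{QR}) + \beta^{QR}(U^{QR}) \cdot X$.

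Uniformity and mean-independence follow at once by integrating the fixed-$t$ optimality conditions. First, $\mathbb{P}(U^{QR} > t) = \E(\mathbf{1}_{\{T(X,Y) > t\}}) = \E(U_t^{QR}) = 1-t$, so $U^{QR}$ is uniform on $[0,1]$. Second, $\E(X \mathbf{1}_{\{U^{QR} > t\}}) = \E(X U_t^{QR}) = 0$ for every $t$, and a standard monotone-class extension yields $\E(X g(U^{QR})) = 0$ for every bounded Borel $g$, which is precisely the mean-independence statement $\E(X \vert U^{QR}) = \E(X) = 0$.

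For the optimality of $U^{QR}$ in \pref{maxcorrmi}, I use the layer-cake representation $V = \int_0^1 \mathbf{1}_{\{V > t\}} \, dt$. If $V$ is any admissible competitor (uniform with $\E(X \vert V) = 0$), then $V_t := \mathbf{1}_{\{V > t\}}$ takes values in $[0,1]$ with $\E(V_t) = 1-t$ and $\E(X V_t) = \E(\mathbf{1}_{\{V > t\}} \E(X \vert V)) = 0$, so $V_t$ is admissible for the fixed-$t$ dual \pref{dt}; the optimality of $U_t^{QR}$ there gives $\E(V_t Y) \leq \E(U_t^{QR} Y)$. Fubini then produces
\[
\E(VY) \;=\; \int_0^1 \E(V_t Y) \, dt \;\leq\; \int_0^1 \E(U_t^{QR} Y) \, dt \;=\; \E(U^{QR} Y),
\]
establishing optimality of $U^{QR}$. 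The only delicate step in the argument is the identification $U^{QR} = T(X,Y)$: it is there that one must combine the boundary conditions at $t=0,1$ with the non-charge hypothesis, so that $T(X,Y)$ is well defined a.s. and the layer-cake reconstruction faithfully returns the level.
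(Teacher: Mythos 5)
Your proof is correct and follows the same overall strategy as the paper's: the Koenker--Bassett normal equations at each fixed $t$, together with the monotonicity of $t\mapsto U_t^{QR}$ coming from quasi-specification, give uniformity and mean-independence of $U^{QR}$; and the layer-cake representation $V=\int_0^1\mathbf{1}_{\{V>t\}}\,dt$ reduces optimality of $U^{QR}$ in \pref{maxcorrmi} to the pointwise-in-$t$ optimality of $U_t^{QR}$ in \pref{dt}.

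Where you diverge is in establishing $Y=\alpha^{QR}(U^{QR})+\beta^{QR}(U^{QR})\cdot X$ a.s. You invert $A(X,\cdot)$ explicitly, producing a deterministic root $T(X,Y)\in(0,1)$ with $A(X,T(X,Y))=Y$, and identify $U^{QR}=T(X,Y)$ by the layer-cake identity. The paper instead proves the two one-sided inequalities $Y\ge A(X,U^{QR})$ and $Y\le A(X,U^{QR})$ by evaluating the level-set implications at $t=U^{QR}\mp\delta$ and sending $\delta\to 0^+$ using the continuity of $(\alpha^{QR},\beta^{QR})$, after first showing $U^{QR}$ is uniform (so $U^{QR}\in(0,1)$ a.s.). Both arguments use continuity, monotonicity, and the non-charge-of-hyperplanes hypothesis (the paper needs it already to write $U_t^{QR}$ as an indicator). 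Your version buys a slightly sharper byproduct---it exhibits $U^{QR}$ as a $\nu$-a.e. well-defined deterministic function of $(X,Y)$, anticipating the uniqueness statement of Proposition~\ref{uniquedec}---but at the price of a separate boundary argument showing $A(X,0)<Y<A(X,1)$ a.s.\ via the normal equations at $t=0,1$; the paper's limiting argument sidesteps any special treatment of the endpoints.
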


\begin{proof}
Obviously
\[U_t^{QR}=1\Rightarrow U^{QR} \ge t, \mbox{ and }  \; U^{QR}>t \Rightarrow U_t^{QR}=1\]
hence $\PP(U^{QR}\ge t)\ge \PP(U_t^{QR}=1)=\PP(Y> \alpha^{QR}(t)+\beta^{QR}(t)\cdot X)=(1-t)$ and  $\PP(U^{QR}> t)\le \PP(U_t^{QR}=1)=(1-t)$ which proves  that $U^{QR}$ is uniformly distributed and $\{U^{QR}>t\}$ coincides with $\{U^{QR}_t=1\}$ up to a set of null probability. We thus have $\E(X \indi_{U^{QR}>t})=\E(X U_t^{QR})=0$, by a standard approximation argument we deduce that $\E(Xf(U^{QR}))=0$ for every $f\in C([0,1], \R)$ which means that $X$ is mean-independent from $U^{QR}$.

As already observed $U^{QR}>t$ implies that $Y>\alpha^{QR}(t)+\beta^{QR}(t)\cdot X$ in particular $Y\ge \alpha^{QR}(U^{QR}-\delta)+\beta^{QR}(U^{QR}- \delta) \cdot X$ for $\delta>0$, letting $\delta\to 0^+$ and using the continuity of $(\alpha^{QR}, \beta^{QR})$ we get $Y\ge \alpha^{QR}(U^{QR})+\beta^{QR}(U^{QR}) \cdot X$. The converse inequality is obtained similarly by remaking that $U^{QR}<t$ implies that $Y\le \alpha^{QR}(t)+\beta^{QR}(t)\cdot X$.

Let us now prove that $U^{QR}$ solves \pref{maxcorrmi}.  Take $V$ uniformly distributed, such that $X$ is mean-independent from $V$ and set $V_t:=\indi_{\{V>t \}}$, we then have $\E(X V_t)=0$, $\E(V_t)=(1-t)$ but since $U_t^{QR}$ solves \pref{dt} we have $\E(V_t Y)\le \E(U_t^{QR}Y)$. Observing that $V=\int_0^1 V_t dt$ and integrating the previous inequality with respect to $t$ gives $\E(VY)\le \E(U^{QR}Y)$ so that $U^{QR}$ solves \pref{maxcorrmi}.

\end{proof}

Let us continue with a uniqueness argument for the mean-independent
decomposition given in proposition \ref{qrqsdec}:

\begin{prop}
\label{uniquedec} Let us assume that
\begin{equation*}
Y=\alpha(U)+\beta(U)\cdot X=\overline{\alpha} (\overline{U})+ \overline{\beta%
}(\overline{U})\cdot X
\end{equation*}
with:

\begin{itemize}
\item both $U$ and $\overline{U}$ uniformly distributed,

\item $X$ is mean-independent from $U$ and $\overline{U}$: ${\mathbb{E}}%
(X\vert U)={\mathbb{E}}(X\vert \overline{U})=0$,

\item $\alpha, \beta, \overline{\alpha}, \overline{\beta}$ are continuous on
$[0,1]$,

\item $(\alpha, \beta)$ and $(\overline{\alpha}, \overline{\beta})$ satisfy
the monotonicity condition (\ref{monqr}),
\end{itemize}

then
\begin{equation*}
\alpha=\overline{\alpha}, \; \beta=\overline{\beta}, \; U=\overline{U}.
\end{equation*}
\end{prop}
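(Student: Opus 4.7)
My plan is to reduce the uniqueness statement to the variational characterization of the mean-independent maximal correlation \pref{maxcorrmi}: I will show that both $U$ and $\bar U$ maximize it, and then exploit the Fenchel--Young equality associated to $(\alpha,\beta)$. Let $A$ and $B$ denote antiderivatives on $[0,1]$ of $\alpha$ and $\beta$, and set $\Phi_x(t):=A(t)+B(t)\cdot x$. Monotonicity \pref{monqr} says precisely that $\partial_t\Phi_x(t)=\alpha(t)+\beta(t)\cdot x$ is strictly increasing in $t$ for $m$-a.e.~$x$, so $\Phi_x$ is convex and $C^1$ in $t$ for such $x$; moreover the hypothesis $Y=\alpha(U)+\beta(U)\cdot X$ is exactly the Fenchel--Young equality
\[UY=\Phi_X(U)+\Phi_X^*(Y)\qquad\mbox{almost surely.}\]

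For any uniformly distributed $V$ with $\E(X|V)=0$, Young's inequality pointwise gives $VY\le \Phi_X(V)+\Phi_X^*(Y)$; the cross term in expectation, $\E(B(V)\cdot X)=\E(B(V)\cdot \E(X|V))$, vanishes by mean-independence, and $\E(A(V))=\int_0^1 A(s)\,ds$ since $V$ is uniform. Hence $\E(VY)\le \int_0^1 A(s)\,ds+\E(\Phi_X^*(Y))=\E(UY)$, so $U$ solves \pref{maxcorrmi}. The same argument applied to $(\bar\alpha,\bar\beta)$ shows $\bar U$ solves it as well, so $\E(\bar U Y)=\E(UY)$. Plugging $V=\bar U$ into the previous chain, the pointwise Young inequality must then be saturated almost surely; by differentiability of $\Phi_X$ in $t$, this forces $Y=\alpha(\bar U)+\beta(\bar U)\cdot X$ almost surely. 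Comparing with $Y=\alpha(U)+\beta(U)\cdot X$ and invoking strict monotonicity of $t\mapsto \alpha(t)+\beta(t)\cdot x$ for $m$-a.e.~$x$ forces $U=\bar U$ almost surely.

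Finally, with $U=\bar U$, the two decompositions yield $(\alpha(U)-\bar\alpha(U))+(\beta(U)-\bar\beta(U))\cdot X=0$ almost surely. Taking conditional expectation given $U$ and using $\E(X|U)=0$ kills the linear term and gives $\alpha(U)=\bar\alpha(U)$ almost surely; continuity then yields $\alpha\equiv\bar\alpha$ on $[0,1]$. The remaining identity $(\beta(U)-\bar\beta(U))\cdot X=0$ almost surely is where I expect the main obstacle: the conclusion $\beta\equiv\bar\beta$ requires ruling out that the conditional law of $X$ given $U$ concentrates on the hyperplane orthogonal to $\beta(u)-\bar\beta(u)$ on any interval of $u$ where this vector is nonzero. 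The nondegeneracy of $m$ (its support contains a ball around the origin) together with the assumption that $\nu$ does not charge non-vertical hyperplanes, combined with continuity of $\beta-\bar\beta$, should close this last step by a contradiction argument.
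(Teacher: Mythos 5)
Your treatment of $U=\bar U$ and $\alpha=\bar\alpha$ follows essentially the paper's route: both arguments encode the hypothesis $Y=\alpha(U)+\beta(U)\cdot X$ as a Fenchel--Young equality for the convex, $C^1$ potential $\Phi_x(t)=\int_0^t(\alpha(s)+\beta(s)\cdot x)\,ds$, use mean-independence and equality of laws to show $\E(UY)=\E(\bar U Y)$, saturate Young's inequality pointwise, and then invoke strict monotonicity of $t\mapsto\alpha(t)+\beta(t)\cdot x$ to conclude $U=\bar U$; taking conditional expectation given $U$ to kill the $X$-term then yields $\alpha=\bar\alpha$. So far this matches the paper's proof.

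The genuine gap is precisely the step you flag, $\beta=\bar\beta$, and the ingredients you list do not obviously close it. From $(\beta(U)-\bar\beta(U))\cdot X=0$ a.s. one cannot directly invoke nondegeneracy of the \emph{marginal} $m=\Law(X)$: what matters is the \emph{conditional} law of $X$ given $U$, and mean-independence $\E(X\mid U)=0$ does not prevent that conditional law from concentrating on the hyperplane $\{x:(\beta(u)-\bar\beta(u))\cdot x=0\}$ for $u$ ranging over an interval. Likewise, ``$\nu$ does not charge nonvertical hyperplanes'' constrains $(X,Y)$ jointly, not $X$ given $U$, so it does not help directly either. The paper closes this step by a different device: routing through the conditional distribution function. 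Using $Y=\alpha(U)+\beta(U)\cdot X=\alpha(U)+\bar\beta(U)\cdot X$ together with the monotonicity of both $t\mapsto\alpha(t)+\beta(t)\cdot x$ and $t\mapsto\alpha(t)+\bar\beta(t)\cdot x$, one computes
\[
F\bigl(x,\alpha(t)+\beta(t)\cdot x\bigr)=\PP(U\le t\mid X=x)=F\bigl(x,\alpha(t)+\bar\beta(t)\cdot x\bigr)
\]
for $m$-a.e.\ $x$ and every $t$. The standing assumption that $F(x,\cdot)$ is increasing then gives $\beta(t)\cdot x=\bar\beta(t)\cdot x$ for $m$-a.e.\ $x$ and all $t$, and only at that point does nondegeneracy of $m$ enter, to conclude $\beta=\bar\beta$. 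The missing idea in your sketch is thus the passage through $F(x,\cdot)$, which converts the a.s.\ identity along the random $U$ into a deterministic identity in $(t,x)$ where the nondegeneracy of $m$ can actually be applied.
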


\begin{proof}
Let us define for every $t\in [0,1]$
\[\varphi(t):=\int_0^t \alpha(s)ds, \; b(t):=\int_0^t \beta(s)ds.\]
Let us also define for $(x,y)$ in $\R^{N+1}$:
\[\psi(x,y):=\max_{t\in [0,1]} \{ty-\varphi(t)-b(t)\cdot x\}\]
thanks to monotonicity condition \pref{monqr}, the maximization program above is strictly concave in $t$ for every $y$ and $m$-a.e.$x$. We then remark that $Y=\alpha(U)+\beta(U)\cdot X=\varphi'(U)+b'(U)\cdot X$ exactly is the first-order condition for the above maximization problem when $(x,y)=(X,Y)$. In other words, we have
\begin{equation}\label{ineqq}
\psi(x,y)+b(t)\cdot x + \varphi(t)\ge ty, \; \forall (t,x,y)\in [0,1]\times \R^N\times \R
\end{equation}
with and equality for $(x,y,t)=(X,Y,U)$ i.e.
\begin{equation}\label{eqas}
\psi(X,Y)+b(U)\cdot X + \varphi(U)=UY, \;   \mbox{ a.s. }
\end{equation}
Using the fact that $\Law(U)=\Law(\oU)$ and the fact that mean-independence gives $\E(b(U)\cdot X)=\E(b(\oU)\cdot X)=0$, we have
\[\E(UY)=\E( \psi(X,Y)+b(U)\cdot X + \varphi(U))= \E( \psi(X,Y)+b(\oU)\cdot X + \varphi(\oU)) \ge \E(\oU Y)\]
but reversing the role of $U$ and $\oU$, we also have $\E(UY)\le \E(\oU Y)$ and then
\[ \E(\oU Y)=  \E( \psi(X,Y)+b(\oU)\cdot X + \varphi(\oU))\]
so that, thanks to inequality \pref{ineqq}
\[\psi(X,Y)+b(\oU)\cdot X + \varphi(\oU)=\oU Y, \;   \mbox{ a.s. }\]
which means that $\oU$ solves $\max_{t\in [0,1]} \{tY-\varphi(t)-b(t)\cdot X\}$ which, by strict concavity admits $U$ as unique solution. This proves that $U=\oU$ and thus
\[\alpha(U)-\oal(U)=(\obe(U)-\beta(U))\cdot X\]
taking the conditional expectation  of both sides with respect to $U$, we then obtain $\alpha=\oal$ and thus $\beta(U)\cdot X=\obe(U)\cdot X$ a.s.. We then compute
\[\begin{split}
F(x, \alpha(t)+\beta(t)\cdot x)&= \PP(\alpha(U)+\beta(U)\cdot X \le \alpha(t)+\beta(t)\cdot x     \vert X=x)  \\
&=\PP( \alpha(U)+ \beta(U)\cdot x \le \alpha(t)+\beta(t)\cdot x \vert X=x)\\
&=\PP(U\le t \vert X=x)
\end{split}\]
and similarly $F(x, \alpha(t)+\obe(t)\cdot x)=\PP(U\le t \vert X=x)=F(x, \alpha(t)+\beta(t)\cdot x)$. Since $F(x,.)$ is increasing for $m$-a.e. $x$, we deduce that $\beta(t)\cdot x=\obe(t)\cdot x$ for $m$-a.e. $x$ and every $t\in[0,1]$. Finally,  the previous considerations and the nondegeneracy of $m$ enable us to conclude that $\beta=\obe$.

\end{proof}

\begin{coro}
If quantile regression is under quasi-specification, the regression coefficients $%
(\alpha^{QR}, \beta^{QR})$ are uniquely defined and if $Y$ can be written as
\begin{equation*}
Y=\alpha(U)+\beta(U)\cdot X
\end{equation*}
for $U$ uniformly distributed, $X$ being mean independent from $U$, $%
(\alpha, \beta)$ continuous such that the monotonicity condition (\ref{monqr}%
) holds then necessarily
\begin{equation*}
\alpha=\alpha^{QR}, \; \beta=\beta^{QR}.
\end{equation*}
\end{coro}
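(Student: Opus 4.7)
The plan is to derive the corollary as a direct consequence of Propositions~\ref{qrqsdec} and~\ref{uniquedec}, which together already do the heavy lifting; essentially only bookkeeping remains.

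First, quasi-specification implies, via Proposition~\ref{qrqsdec}, that $U^{QR}:=\int_0^1 U_t^{QR}\,dt$ is uniformly distributed on $[0,1]$, is mean-independent from $X$ (i.e.\ $\E(X\vert U^{QR})=0$), and satisfies $Y=\alpha^{QR}(U^{QR})+\beta^{QR}(U^{QR})\cdot X$ almost surely. By the very definition of quasi-specification, $t\mapsto(\alpha^{QR}(t),\beta^{QR}(t))$ is continuous on $[0,1]$ and satisfies the monotonicity condition~(\ref{monqr}). Hence the triple $(\alpha^{QR},\beta^{QR},U^{QR})$ fulfills all the hypotheses of Proposition~\ref{uniquedec}.

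Next, given any other triple $(\alpha,\beta,U)$ with $Y=\alpha(U)+\beta(U)\cdot X$ a.s., $\Law(U)=\mu$, $\E(X\vert U)=0$, $(\alpha,\beta)$ continuous, and satisfying~(\ref{monqr}), I would apply Proposition~\ref{uniquedec} to the two decompositions $(\alpha^{QR},\beta^{QR},U^{QR})$ and $(\alpha,\beta,U)$; its conclusion yields at once $\alpha=\alpha^{QR}$, $\beta=\beta^{QR}$ (and incidentally $U=U^{QR}$), which is precisely the second assertion of the corollary. For the uniqueness of $(\alpha^{QR},\beta^{QR})$ themselves, I would suppose that two choices $(\alpha^{QR}_1,\beta^{QR}_1)$ and $(\alpha^{QR}_2,\beta^{QR}_2)$ both witness quasi-specification; each produces, via Proposition~\ref{qrqsdec}, a decomposition of $Y$ meeting the hypotheses of Proposition~\ref{uniquedec}, and comparing the two via that proposition forces $(\alpha^{QR}_1,\beta^{QR}_1)=(\alpha^{QR}_2,\beta^{QR}_2)$.

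No serious obstacle arises in carrying out this plan: all the real work—namely the variational argument based on the potentials $\varphi(t):=\int_0^t\alpha(s)\,ds$ and $b(t):=\int_0^t\beta(s)\,ds$, the strict concavity of $t\mapsto ty-\varphi(t)-b(t)\cdot x$ arising from monotonicity~(\ref{monqr}), and the cancellation of the $b$-term under mean-independence—has already been carried out in the proof of Proposition~\ref{uniquedec}. The only thing to verify in writing this corollary is that quasi-specification is precisely the hypothesis needed to invoke Proposition~\ref{uniquedec}, and this is immediate from the conclusions of Proposition~\ref{qrqsdec}.
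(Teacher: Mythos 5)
Your proposal is correct and is exactly the argument the paper intends: the corollary is stated without a separate proof precisely because it is an immediate combination of Propositions~\ref{qrqsdec} and~\ref{uniquedec}, read off in the way you describe (including the point that two a priori different witnesses $(\alpha^{QR}_i,\beta^{QR}_i)$ of quasi-specification yield possibly different $U^{QR}_i$ via (\ref{utqr}), but Proposition~\ref{uniquedec} then forces both the coefficient pairs and the $U$'s to coincide).
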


To sum up, we have shown that quasi-specification is equivalent to the
validity of the factor linear model:
\begin{equation*}
Y=\alpha(U)+\beta(U)\cdot X
\end{equation*}
for $(\alpha, \beta)$ continuous and satisfying the monotonicity condition (%
\ref{monqr}) and $U$, uniformly distributed and such that $X$ is
mean-independent from $U$. This has to be compared with the decomposition of
paragraph \ref{univar1} where $U$ is required to be independent from $X$ but
the dependence of $Y$ with respect to $U$, given $X$, is given by any
nondecreasing function of $U$.

\subsection{Global approaches and duality}

\label{univar3}

Now we wish to address quantile regression in the case where neither
specification nor quasi-specification can be taken for granted. In such a
general situation, keeping in mind the remarks from the previous paragraphs,
we can think of two natural approaches.

The first one consists in studying directly the correlation maximization
with a mean-independence constraint (\ref{maxcorrmi}). The second one
consists in getting back to the Koenker and Bassett $t$ by $t$ problem (\ref%
{dt}) but adding as an additional global consistency constraint that $U_t$
should be nonincreasing with respect to $t$:

\begin{equation}  \label{monconstr}
\sup\{{\mathbb{E}}(\int_0^1 U_t Ydt ) \; : \: U_t \mbox{ nonincr.}, U_t\in
[0,1],\; {\mathbb{E}}(U_t)=(1-t), \; {\mathbb{E}}(U_t X)=0\}
\end{equation}

Our aim is to compare these two approaches (and in particular to show that
the maximization problems (\ref{maxcorrmi}) and (\ref{monconstr}) have the
same value) as well as their dual formulations. Before going further, let us
remark that (\ref{maxcorrmi}) can directly be considered in the multivariate
case whereas the monotonicity constrained problem (\ref{monconstr}) makes
sense only in the univariate case.

As proven in \cite{ccg}, (\ref{maxcorrmi}) is dual to
\begin{equation}  \label{dualmi}
\inf_{(\psi, \varphi, b)} \{{\mathbb{E}}(\psi(X,Y))+{\mathbb{E}}(\varphi(U))
\; : \; \psi(x,y)+ \varphi(u)\ge uy -b(u)\cdot x\}
\end{equation}
which can be reformulated as:
\begin{equation}  \label{dualmiref}
\inf_{(\varphi, b)} \int \max_{t\in [0,1]} ( ty- \varphi(t) -b(t)\cdot x)
\nu(dx, dy) +\int_0^1 \varphi(t) dt
\end{equation}
in the sense that
\begin{equation}  \label{nodualgap}
\sup (\ref{maxcorrmi})=\inf(\ref{dualmi})=\inf (\ref{dualmiref}).
\end{equation}

The existence of a solution to (\ref{dualmi}) is not straightforward and is
established under appropriate assumptions in the appendix directly in the
multivariate case. The following result shows that there is a $t$-dependent
reformulation of (\ref{maxcorrmi}):

\begin{lem}
\label{treform} The value of (\ref{maxcorrmi}) coincides with
\begin{equation}  \label{monconstr01}
\sup\{{\mathbb{E}}(\int_0^t U_t Ydt ) \; : \: U_t \mbox{ nonincr.}, U_t\in
\{0,1\},\; {\mathbb{E}}(U_t)=(1-t), \; {\mathbb{E}}(U_t X)=0\}
\end{equation}
\end{lem}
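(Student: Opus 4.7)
The plan is to exhibit a bijection (via the layer-cake representation) between admissible variables $V$ for \pref{maxcorrmi} and admissible families $(U_t)_{t\in[0,1]}$ for \pref{monconstr01}, under which the two objectives coincide.

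First, I would show $\sup\pref{maxcorrmi}\le\sup\pref{monconstr01}$. Given $V$ with $\Law(V)=\mu$ and $\E(X\vert V)=0$, set $U_t:=\indi_{\{V>t\}}$. Then each $U_t$ takes values in $\{0,1\}$ and $t\mapsto U_t$ is nonincreasing a.s. The uniform distribution of $V$ yields $\E(U_t)=\PP(V>t)=1-t$, and mean-independence gives $\E(X f(V))=0$ for every bounded Borel $f$, applied to $f(v)=\indi_{\{v>t\}}$ this yields $\E(X U_t)=0$. Moreover, by the layer-cake identity $V=\int_0^1 U_t\,dt$ a.s., so Fubini gives $\E(VY)=\E\bigl(\int_0^1 U_t Y\,dt\bigr)$, proving this family is admissible for \pref{monconstr01} with the same value.

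Conversely, for $\sup\pref{monconstr01}\le\sup\pref{maxcorrmi}$, given an admissible family $(U_t)$ for \pref{monconstr01} define $V:=\int_0^1 U_t\,dt$. Because $t\mapsto U_t(\omega)$ is nonincreasing and $\{0,1\}$-valued a.s., we have $V(\omega)=\sup\{t\in[0,1]:U_t(\omega)=1\}$ (with the usual convention $\sup\emptyset=0$), so $\{V>t\}$ and $\{U_t=1\}$ coincide up to a null set for each $t$. Consequently $\PP(V>t)=\E(U_t)=1-t$, which shows $\Law(V)=\mu$. For mean-independence I would check that $\E(X g(V))=0$ for $g$ running over continuous functions on $[0,1]$: for $g=\indi_{(t,1]}$ this is exactly $\E(X U_t)=0$, and the general case follows by a standard approximation using linear combinations of such indicators (Fubini yields the identity $\E(X g(V))=\int_0^1 g'(t)\E(XU_t)\,dt$ for $g\in C^1$, then density). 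Finally, the layer-cake identity again gives $\E\bigl(\int_0^1 U_t Y\,dt\bigr)=\E(VY)$, so $V$ is admissible for \pref{maxcorrmi} with the same value.

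Combining both inequalities yields the claim. The main (minor) obstacle is the passage from the family of constraints $\E(X U_t)=0$ (one for each $t$) to the full mean-independence $\E(X\vert V)=0$; as sketched above this is a routine approximation, since the $\sigma$-algebra generated by $V$ is generated by the family of indicators $\{V>t\}$ for $t\in[0,1]$.
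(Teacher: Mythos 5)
Your proof is correct and follows essentially the same route as the paper's: both directions use the layer-cake decomposition $V=\int_0^1 \indi_{\{V>t\}}\,dt$, and the converse direction recovers $V$ from $(U_t)$, shows $\{V>t\}$ coincides with $\{U_t=1\}$ up to a null set using $\E(U_t)=1-t$, deduces uniformity of $V$, and passes from $\E(X U_t)=0$ to mean-independence by approximation. The paper's write-up is slightly terser on the last approximation step; yours spells it out, but there is no difference in substance.
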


\begin{proof}
Let $U$ be admissible for \pref{maxcorrmi} and define $U_t:=\indi_{\{U>t\}}$ then $U=\int_0^1 U_t dt$ and obviously $(U_t)_t$ is admissible for \pref{monconstr01}, we thus have $\sup \pref{maxcorrmi} \le \sup \pref{monconstr01}$. Take now $(V_t)_t$ admissible for  \pref{monconstr01} and let $V:=\int_0^1 V_t dt$, we then have
\[V>t \Rightarrow V_t=1\Rightarrow V\ge t\]
since $\E(V_t)=(1-t)$ this implies that $V$ is uniformly distributed and $V_t=\indi_{\{V>t\}}$ a.s. so that $\E(X   \indi_{\{V>t\}})=0$ which implies that $X$ is mean-independent from $V$ and thus $\E(\int_0^1 V_t Y dt)\le \sup \pref{maxcorrmi}$. We conclude that   $\sup \pref{maxcorrmi} = \sup \pref{monconstr01}$.

\end{proof}

Let us now define
\begin{equation*}
\mathcal{C}:=\{u \; : \; [0,1]\mapsto [0,1], \mbox { nonincreasing}\}
\end{equation*}

Let $(U_t)_t$ be admissible for (\ref{monconstr}) and set
\begin{equation*}
v_t(x,y):={\mathbb{E}}(U_t \vert X=x, Y=y), \; V_t:= v_t(X,Y)
\end{equation*}
it is obvious that $(V_t)_t$ is admissible for (\ref{monconstr}) and by
construction ${\mathbb{E}}(V_t Y)={\mathbb{E}}(U_t Y)$. Moreover the
deterministic function $(t,x,y)\mapsto v_t(x,y)$ satisfies the following
conditions:
\begin{equation}  \label{CCt}
\mbox{for fixed $(x,y)$, } t\mapsto v_t(x,y) \mbox{ belongs to $\CC$,}
\end{equation}
and for a.e. $t\in [0,1]$,
\begin{equation}  \label{moments}
\int v_t(x,y) \nu(dx, dy)=(1-t), \; \int v_t(x,y) x\nu(dx, dy)=0.
\end{equation}
Conversely, if $(t,x,y)\mapsto v_t(x,y)$ satisfies (\ref{CCt})-(\ref{moments}%
), $V_t:=v_t(X,Y)$ is admissible for (\ref{monconstr}) and ${\mathbb{E}}(V_t
Y)=\int v_t(x,y) y \nu(dx, dy)$. All this proves that $\sup(\ref{monconstr})$
coincides with
\begin{equation}  \label{supvt}
\sup_{(t,x,y)\mapsto v_t(x,y)} \int v_t(x,y) y \nu(dx, dy)dt
\mbox{ subject
to: } (\ref{CCt})-(\ref{moments})
\end{equation}

\begin{thm}
\label{equivkb}
\begin{equation*}
\sup (\ref{maxcorrmi})=\sup (\ref{monconstr}).
\end{equation*}
\end{thm}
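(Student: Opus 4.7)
The first inequality $\sup (\ref{maxcorrmi}) \le \sup (\ref{monconstr})$ follows at once from Lemma \ref{treform}: the lemma identifies $\sup (\ref{maxcorrmi})$ with the value of (\ref{monconstr01}), and (\ref{monconstr}) is obtained from (\ref{monconstr01}) by relaxing the pointwise constraint $U_t\in\{0,1\}$ to $U_t\in[0,1]$, so its value is at least as large.

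For the converse inequality $\sup (\ref{monconstr}) \le \sup (\ref{maxcorrmi})$, my plan is to use a randomization argument that reduces a $[0,1]$-valued admissible family to a $\{0,1\}$-valued one while preserving all the constraints and the objective, and then apply Lemma \ref{treform} again. Given $(U_t)_t$ admissible for (\ref{monconstr}), I would enlarge $(\Omega,\FF,\PP)$ (which is permitted thanks to nonatomicity, and does not change the value of the problem since all constraints are distributional) so as to accommodate an auxiliary random variable $S$ with $\Law(S)=\unif([0,1])$ and independent of $(X,Y,(U_t)_{t\in[0,1]})$, and set
\[
V_t := \indi_{\{S<U_t\}}.
\]
Then $V_t$ is $\{0,1\}$-valued, and because $S(\omega)$ is a fixed threshold while $t\mapsto U_t(\omega)$ is nonincreasing, $t\mapsto V_t(\omega)$ is nonincreasing as well. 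Conditioning on $(X,U_t)$ and using the independence of $S$ gives
\[
\E(V_t)=\E(U_t)=1-t,\qquad \E(XV_t)=\E(X\,U_t)=0,
\]
and an identical conditioning yields $\E(\int_0^1 V_tY\,dt)=\E(\int_0^1 U_tY\,dt)$. Hence $(V_t)$ is admissible for (\ref{monconstr01}) with the same objective value, so invoking Lemma \ref{treform} gives $\sup(\ref{monconstr})\le \sup(\ref{monconstr01})=\sup(\ref{maxcorrmi})$.

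The main conceptual obstacle is finding the right way to convert $[0,1]$-valued $U_t$'s into $\{0,1\}$-valued indicators: the naive choice $V:=\int_0^1 U_t\,dt$ fails, as the deterministic example $U_t\equiv 1-t$ already shows (it produces the constant $V=1/2$, which is not uniform). The auxiliary threshold $S$ is the correct device because its independence from $(X,Y,U_t)$ preserves the marginal and mean-independence constraints in expectation, its being a common threshold across all $t$ preserves the monotonicity in $t$, and conditioning on $U_t$ preserves the objective. Everything else is a routine verification.
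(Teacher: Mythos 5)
Your proof is correct, and it takes a genuinely different, more elementary route than the paper's. The paper obtains the hard direction $\sup(\ref{monconstr}) \le \sup(\ref{maxcorrmi})$ by passing to the deterministic reformulation (\ref{supvt}) with $v_t(x,y)=\E(U_t\vert X=x,Y=y)$, applying weak duality ($\sup\inf\le\inf\sup$), evaluating the inner supremum over nonincreasing $[0,1]$-valued functions via Lemma~\ref{suppC}, and recognizing the result as $\inf(\ref{dualmiref})$, which equals $\sup(\ref{maxcorrmi})$ by the strong duality (\ref{nodualgap}). You instead stay entirely on the primal side: a single auxiliary uniform threshold $S$ independent of $(X,Y,(U_t)_t)$ turns any $[0,1]$-valued admissible family into the $\{0,1\}$-valued family $V_t=\indi_{\{S<U_t\}}$ with the same constraints and the same objective -- the fact that $S$ is one $t$-independent threshold is precisely what preserves pathwise monotonicity in $t$ -- after which Lemma~\ref{treform} closes the loop. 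Your argument is shorter, avoids the duality machinery and Lemma~\ref{suppC} entirely, and exposes the structural fact that the $[0,1]$-relaxation in (\ref{monconstr}) is free (bang-bang families suffice); the paper's route, in exchange, also yields the explicit dual identity $\sup(\ref{monconstr})=\inf(\ref{dualmiref})$ as a by-product, in line with the paper's duality-centric development. The one step you leave implicit and should spell out is that enlarging the probability space to carry $S$ leaves the values of (\ref{maxcorrmi}), (\ref{monconstr01}) and (\ref{monconstr}) unchanged: lifting gives one inequality, and projecting via $\E(\,\cdot\,\vert X,Y)$ (which preserves the $[0,1]$-range, monotonicity in $t$, the moment constraints and the objective) gives the other; for (\ref{monconstr01}), where conditional expectation does not preserve the $\{0,1\}$-constraint, one instead invokes Lemma~\ref{treform} on the enlarged space.
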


\begin{proof}

We know from lemma \ref{treform} and the remarks above that
\[\sup \pref{maxcorrmi}=\sup \pref{monconstr01} \le  \sup \pref{monconstr}=\sup \pref{supvt}.\]
But now we may  get rid of constraints \pref{moments} by rewriting \pref{supvt} in sup-inf form as
\[\begin{split}
\sup_{\qtext{$v_t$ satisfies \pref{CCt}}}  \inf_{(\alpha, \beta)} \int v_t(x,y)(y-\alpha(t)-\beta(t) \cdot x) \nu(dx,dy)dt +\int_0^1 (1-t)\alpha(t) dt.
\end{split}\]
Recall that one always have $\sup \inf \le  \inf \sup$ so that $\sup\pref{supvt}$ is less than
\[\begin{split}
\inf_{(\alpha, \beta)} \sup_{\qtext{$v_t$ satisf. \pref{CCt}}}  \int v_t(x,y)(y-\alpha(t)-\beta(t) \cdot x) \nu(dx,dy)dt +\int_0^1 (1-t)\alpha(t) dt\\
\le \inf_{(\alpha, \beta)}  \int \Big  (\sup_{v\in \CC}  \int_0^1 v(t)(y-\alpha(t)-\beta(t)x)dt    \Big) \nu(dx,dy)+ \int_0^1 (1-t)\alpha(t) dt.
\end{split}\]
It follows from Lemma \ref{suppC} below that, for $q\in L^1(0,1)$ defining $Q(t):=\int_0^t q(s) ds$, one has
\[\sup_{v\in \CC}  \int_0^1 v(t) q(t)dt=\max_{t\in [0,1]} Q(t).\]
So setting $\varphi(t):=\int_0^t \alpha(s) ds$, $b(t):=\int_0^t \beta(s)ds$ and remarking that integrating by parts immediately gives
\[\int_0^1 (1-t)\alpha(t) dt=\int_0^1 \varphi(t) dt\]
we thus have
\[\begin{split}
\sup_{v\in \CC}  \int_0^1 v(t)(y-\alpha(t)-\beta(t)x)dt   + \int_0^1 (1-t)\alpha(t) dt\\
=   \max_{t\in[0,1]} \{t y-\varphi(t)-b(t) x\} +\int_0^1 \varphi(t) dt.
\end{split}\]
This yields\footnote{The functions $\varphi$ and $b$ constructed above vanish at $0$ and  are absolutely continuous  but this is by no means a restriction in the minimization problem \pref{dualmiref} as explained in paragraph \ref{mvqr2}.}
\[\sup\pref{supvt} \le \inf_{(\varphi, b)}    \int \max_{t\in [0,1]} ( ty-  \varphi(t) -b(t)\cdot x)   \nu(dx, dy) +\int_0^1 \varphi(t) dt =\inf \pref{dualmiref}    \]
but we know from \pref{nodualgap} that $\inf \pref{dualmiref} =\sup \pref{maxcorrmi}$ which ends the proof.

\end{proof}

In the previous proof, we have used the elementary result (proven in the
appendix).

\begin{lem}
\label{suppC} Let $q\in L^1(0,1)$ and define $Q(t):=\int_0^t q(s) ds$ for
every $t\in [0,1]$, one has
\begin{equation*}
\sup_{v\in \mathcal{C}} \int_0^1 v(t) q(t)dt=\max_{t\in [0,1]} Q(t).
\end{equation*}
\end{lem}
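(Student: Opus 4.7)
The plan is to prove the two inequalities $\sup_{v\in\CC}\int_0^1 v(t) q(t)\,dt \le \max_{t\in[0,1]} Q(t)$ and the reverse separately. The easier direction is $\ge$: since $Q$ is continuous on $[0,1]$, the maximum is attained at some $t^*\in[0,1]$; the function $v^*:=\indi_{[0,t^*]}$ belongs to $\CC$ (it takes values in $\{0,1\}\subset [0,1]$ and is nonincreasing) and plugging it in gives $\int_0^1 v^* q = \int_0^{t^*} q = Q(t^*) = \max_{[0,1]} Q$.

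For the reverse inequality the idea is a layer-cake decomposition of $v\in\CC$. Given $v:[0,1]\to[0,1]$ nonincreasing, write $v(t)=\int_0^1 \indi_{\{v(t)>s\}}\, ds$. Because $v$ is nonincreasing, for each $s\in(0,1)$ the super-level set $\{t\in[0,1]:\, v(t)>s\}$ is of the form $[0,\tau(s))$ (or empty, or all of $[0,1]$) for some $\tau(s)\in[0,1]$. Substituting into $\int_0^1 v(t) q(t)\,dt$ and applying Fubini's theorem yields
\[
\int_0^1 v(t) q(t)\,dt=\int_0^1 \Big(\int_0^{\tau(s)} q(t)\,dt\Big)\,ds=\int_0^1 Q(\tau(s))\,ds\le \max_{t\in[0,1]}Q(t),
\]
which is the desired bound. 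Taking the supremum over $v\in\CC$ gives the inequality $\le$, and combined with the previous step this proves the lemma.

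The only delicate step is the measurability of $s\mapsto \tau(s)$ and the application of Fubini, but these follow from the fact that $\{v>s\}$ is a left-initial segment of $[0,1]$ for every $s$, so $\tau$ is itself the (nonincreasing) generalized inverse of $v$; both $(s,t)\mapsto \indi_{\{v(t)>s\}}$ and $(s,t)\mapsto \indi_{[0,\tau(s))}(t) q(t)$ are jointly measurable and integrable over $[0,1]^2$, so Fubini applies and no real obstacle arises.
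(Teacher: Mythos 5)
Your proof is correct, and it takes a genuinely different route from the paper's. The paper proves the upper bound by an integration by parts argument: it regards $v\in\CC$ as a function of bounded variation whose distributional derivative is a nonpositive measure $\eta$ on $[0,1]$, writes $\int_0^1 v\,q = -\int_0^1 Q\,d\eta + v(1^-)Q(1)$, and then bounds each term using $-\eta\ge 0$, $v(0^+)\le 1$, $v(1^-)\ge 0$, and $Q(0)=0$. You instead use a layer-cake (Cavalieri) decomposition $v(t)=\int_0^1 \indi_{\{v(t)>s\}}\,ds$, exploit the fact that for a nonincreasing $v$ every superlevel set is a left-initial segment $[0,\tau(s))$ (up to a null endpoint), and apply Fubini to obtain $\int_0^1 v\,q=\int_0^1 Q(\tau(s))\,ds\le\max_{[0,1]}Q$. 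The two arguments are essentially dual to one another: the paper moves the derivative onto $v$, whereas you average $v$ itself over its level sets. Your version is more elementary, requiring only Fubini's theorem rather than Lebesgue--Stieltjes integration by parts and the identification of the BV derivative of $v$; it also makes transparent the underlying structural reason for the result, namely that the extreme points of $\CC$ are exactly the indicators $\indi_{[0,t]}$, and your identity $\int_0^1 v\,q=\int_0^1 Q(\tau(s))\,ds$ exhibits the objective at $v$ as a mixture of its values at those extreme points. The paper's proof is slightly shorter once the BV machinery is granted, but conceptually yours is cleaner. Both proofs use the same trivial lower bound via $\indi_{[0,t^*]}$.
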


\section*{Appendix}

\subsection*{Proof of Lemma \protect\ref{suppC}}

Since $\mathbf{1}_{[0,t]} \in \mathcal{C}$, one obviously first has
\begin{equation*}
\sup_{v\in \mathcal{C}} \int_0^1 v(s) q(s)ds \ge \max_{t\in [0,1]} \int_0^t
q(s)ds=\max_{t\in [0,1]} Q(t).
\end{equation*}
Let us now prove the converse inequality, taking an arbitrary $v\in \mathcal{%
C}$. We first observe that $Q$ is absolutely continuous and that $v$ is of
bounded variation (its derivative in the sense of distributions being a
bounded nonpositive measure which we denote by $\eta$), integrating by parts
and using the definition of $\mathcal{C}$ then give:
\begin{equation*}
\begin{split}
\int_0^1 v(s) q(s)ds &=-\int_0^1 Q \eta + v(1^-) Q(1) \\
& \le (\max_{[0,1]} Q)\times (-\eta([0,1]) + v(1^-) Q(1) \\
&= (\max_{[0,1]} Q) (v(0^+)-v(1^-)) +v(1^-) Q(1) \\
&= (\max_{[0,1]} Q) v(0^+) + (Q(1)- \max_{[0,1]} Q) v(1^-) \\
& \le \max_{[0,1]} Q.
\end{split}%
\end{equation*}

\subsection*{Proof of theorem \protect\ref{existdual}}

Let us denote by $(0, \overline{y})$ the barycenter of $\nu$:
\begin{equation*}
\int_{\Omega} x \; \nu(dx, dy)=0, \; \int_{\Omega} y \; \nu(dx, dy)=:%
\overline{y}
\end{equation*}
and observe that $(0, \overline{y})\in \Omega$ (otherwise, by convexity, $%
\nu $ would be supported on $\partial \Omega$ which would contradict our
assumption that $\nu\in L^{\infty}(\Omega)$).

We wish to prove the existence of optimal potentials for the problem
\begin{equation}  \label{duallike}
\inf_{\psi, \varphi, b } \int_{\Omega} \psi(x,y) d \nu(x,y) + \int_{[0,1]^d}
\varphi(u) d\mu(u)
\end{equation}
subject to the pointwise constraint that
\begin{equation}  \label{const}
\psi(x,y)+\varphi(u)\ge u\cdot y -b(u)\cdot x, \; (x,y)\in \overline{\Omega}%
, \; u\in [0,1]^d.
\end{equation}
Of course, we can take $\psi$ that satisfies
\begin{equation*}
\psi(x,y):=\sup_{u\in [0,1]^d} \{ u\cdot y -b(u)\cdot x-\varphi(u)\}
\end{equation*}
so that $\psi$ can be chosen convex and $1$ Lipschitz with respect to $y$.
In particular, we have
\begin{equation}  \label{lip}
\psi(x,\overline{y})-\vert y-\overline{y} \vert \le \psi(x,y) \leq \psi(x,%
\overline{y})+\vert y-\overline{y} \vert.
\end{equation}
The problem being invariant by the transform $(\psi, \varphi)\to (\psi+C,
\psi-C)$ ($C$ being an arbitrary constant), we can add as a normalization
the condition that
\begin{equation}  \label{normaliz}
\psi(0, \overline{y})=0.
\end{equation}
This normalization and the constraint (\ref{const}) imply that
\begin{equation}  \label{fipos}
\varphi(t)\ge t\cdot \overline{y} -\psi(0, \overline{y}) \ge -\vert
\overline{y} \vert.
\end{equation}
We note that there is one extra invariance of the problem: if one adds an
affine term $q\cdot x$ to $\psi$ this does not change the cost and neither
does it affect the constraint, provided one modifies $b$ accordingly by
substracting to it the constant vector $q$. Take then $q$ in the
subdifferential of $x\mapsto \psi(x,\overline{y})$ at $0$ and change $\psi$
into $\psi-q\cdot x$, we obtain a new potential with the same properties as
above and with the additional property that $\psi(.,\overline{y})$ is
minimal at $x=0$, and thus $\psi(x,\overline{y})\ge 0$, together with (\ref%
{lip}) this gives the lower bound
\begin{equation}  \label{lb}
\psi(x,y)\ge -\vert y-\overline{y} \vert \ge -C
\end{equation}
where the bound comes from the boundedness of $\Omega$ (from now one, $C$
will denote a generic constant maybe changing from one line to another).

Now take a minimizing sequence $(\psi_n, \varphi_n, b_n)\in C(\overline{%
\Omega}, {\mathbb{R}})\times C([0,1]^d, {\mathbb{R}})\times C([0,1]^d, {%
\mathbb{R}}^N)$ where for each $n$, $\psi_n$ has been chosen with the same
properties as above. Since $\varphi_n$ and $\psi_n$ are bounded from below ($%
\varphi_n \ge -\vert \overline{y}\vert$ and $\psi_n \ge C$) and since the
sequence is minimizing, we deduce immediately that $\psi_n$ and $\varphi_n$
are bounded sequences in $L^1$. Let $z=(x,y)\in \Omega$ and $r>0$ be such
that the distance between $z$ and the complement of $\Omega$ is at least $2r$%
, (so that $B_r(z)$ is in the set of points that are at least at distance $r$
from $\partial \Omega$), by assumption there is an $\alpha_r>0$ such that $%
\nu \ge \alpha_r$ on $B_r(z)$. We then deduce from the convexity of $\psi_n$%
:
\begin{equation*}
C \le \psi_n(z)\le \frac{1}{\vert B_r(z)\vert }\int_{B_r(z)} \psi_n \leq
\frac{1}{\vert B_r(z)\vert \alpha_r } \int_{B_r(z)} \vert \psi_n\vert \nu
\le \frac{1}{\vert B_r(z)\vert \alpha_r } \Vert \psi_n \Vert_{L^1(\nu)}
\end{equation*}
so that $\psi_n$ is actually bounded in $L^{\infty}_{{\mathrm{loc}}}$ and by
convexity, we also have
\begin{equation*}
\Vert \nabla \psi_n \Vert_{L^{\infty}(B_r(z))} \le \frac{2}{R-r} \Vert
\psi_n \Vert_{L^{\infty}(B_R(z))}
\end{equation*}
whenever $R>r$ and $B_R(z)\subset \Omega$ (see for instance Lemma 5.1 in
\cite{cg} for a proof of such bounds). We can thus conclude that $\psi_n$ is
also locally uniformly Lipschitz. Therefore, thanks to Ascoli's theorem, we
can assume, taking a subsequence if necessary, that $\psi_n$ converges
locally uniformly to some potential $\psi$.

Let us now prove that $b_n$ is bounded in $L^1$, for this take $r>0$ such
that $B_{2r}(0, \overline{y})$ is included in $\Omega$. For every $x\in
B_r(0)$, any $t\in[0,1]^d $ and any $n$ we then have
\begin{equation*}
\begin{split}
-b_n(t) \cdot x \le \varphi_n(t)-t \cdot \overline{y} + \Vert \psi_n
\Vert_{L^{\infty} (B_r(0, \overline{y}))} \le C+\varphi_n(t)
\end{split}%
\end{equation*}
maximizing in $x \in B_r(0)$ immediately gives
\begin{equation*}
\vert b_n(t )\vert r \leq C +\varphi_n(t).
\end{equation*}
From which we deduce that $b_n$ is bounded in $L^1$ since $\varphi_n$ is.

\smallskip

From Komlos' theorem (see \cite{komlos}), we may find a subsequence such
that the Cesaro means
\begin{equation*}
\frac{1}{n} \sum_{k=1}^n \varphi_k, \; \frac{1}{n} \sum_{k=1}^n b_k
\end{equation*}
converge a.e. respectively to some $\varphi$ and $b$. Clearly $\psi$, $%
\varphi$ and $b$ satisfy the linear constraint (\ref{const}), and since the
sequence of Cesaro means $(\psi^{\prime }_n, \phi^{\prime }_n, b^{\prime
}_n):= n^{-1}\sum_{k=1}^n (\psi_k, \phi_k, b_k)$ is also minimizing, we
deduce from Fatous' Lemma
\begin{equation*}
\begin{split}
&\int_{\Omega} \psi(x,y) d \nu(x,y) + \int_{[0,1]^d} \varphi(u) d\mu(u) \\
& \leq \liminf_n \int_{\Omega} \psi^{\prime }_n(x,y) d \nu(x,y) +
\int_{[0,1]^d} \varphi^{\prime }_n(u) d\mu(u)=\inf(\ref{duallike})
\end{split}%
\end{equation*}
which ends the existence proof.

\end{document}